\documentclass{amsart}

\usepackage{graphicx}
\usepackage[labelsep=space]{caption}
\usepackage{amsfonts}
\usepackage{tikz-cd}
\usepackage{amsthm}
\usepackage{nccmath}
\usepackage{etex}
\usepackage{amssymb,mathtools}
\usepackage{mathrsfs}
\usepackage{dsfont}
\usepackage[all]{xy}
\usepackage{microtype}

\usepackage[utf8]{inputenc}
\usepackage[T1]{fontenc}
\usepackage{lmodern} 
\usepackage{latexsym}
\usepackage{MnSymbol}
\usepackage{nicefrac}
\usepackage{microtype}
\usepackage{color}
\usepackage{tikz-cd}
\usepackage{blindtext}
\usepackage{enumitem}
\newcommand{\subscript}[2]{$#1 _ #2$}
\usepackage{old-arrows}
\usepackage{empheq}
\usepackage{extpfeil}
\usepackage{hyperref}
\usepackage{theoremref}
\DeclareMathOperator{\Image}{Im}
\usepackage{float}
\makeatletter
\renewenvironment{proof}[1][\proofname]{%
	\par\pushQED{\qed}\normalfont%
	\topsep6\p@\@plus6\p@\relax
	\trivlist\item[\hskip\labelsep\bfseries#1\@addpunct{.}]%
	\ignorespaces
}{%
	\popQED\endtrivlist\@endpefalse
}
\makeatother

\usepackage{geometry}
\geometry{a4paper,top=3cm,bottom=3cm,inner=3cm,outer=3cm,footskip=1cm}
\usepackage{secdot}
\usetikzlibrary{decorations.markings}
\tikzset{double line with arrow/.style args={#1,#2}{decorate,decoration={markings,%
			mark=at position 0 with {\coordinate (ta-base-1) at (0,1pt);
				\coordinate (ta-base-2) at (0,-1pt);},
			mark=at position 1 with {\d[#1] (ta-base-1) -- (0,1pt);
				\d[#2] (ta-base-2) -- (0,-1pt);
}}}}
\usepackage{ifxetex}
\usepackage{etoolbox}
\ifxetex
\usepackage{unicode-math}
\makeatletter
\patchcmd{\arrowfill@}{-7mu}{-14mu}{}{}
\patchcmd{\arrowfill@}{-7mu}{-14mu}{}{}
\patchcmd{\arrowfill@}{-2mu}{-4mu}{}{}
\patchcmd{\arrowfill@}{-2mu}{-4mu}{}{}
\makeatother
\fi

\theoremstyle{plain}
\newtheorem{theorem}{Theorem}[section]
\newtheorem{lemma}[theorem]{Lemma}
\newtheorem{corollary}[theorem]{Corollary}
\newtheorem{proposition}[theorem]{Proposition}
\theoremstyle{remark}
\newtheorem{remark}[theorem]{Remark}
\theoremstyle{definition}
\newtheorem{definition}[theorem]{Definition}

\newtheorem{theoremx}{Theorem}

\newtheorem{propositionx}[theoremx]{Proposition}

\begin{document}
\title{THE UNRESTRICTED VIRTUAL BRAID GROUPS $UVB_n$}

\author{Stavroula Makri}
\address{Normandie Univ., UNICAEN, CNRS, LMNO, 14000 Caen, France}
\email{stavroula.makri@unicaen.fr}

\begin{abstract}
Let $UVB_n$ and $UVP_n$ be the unrestricted virtual braid group and the unrestricted virtual pure braid group on n strands respectively. We study the groups $UVB_n$ and  $UVP_n$, and our main results are as follows: for $n\geq 5$, we give a complete description, up to conjugation, to all possible homomorphisms from $UVB_n$ to the symmetric group $S_n$. For $n\geq 3$, we characterise all possible images of $UVB_n$, under a group homomorphism, to any finite group $G$. For $n\geq 5$, we prove that $UVP_n$ is a characteristic subgroup of $UVB_n$. 
 In addition, we determine the automorphism group of $UVP_n$ and we prove that $\mathbb{Z}_2\times\mathbb{Z}_2$ is a subgroup of the outer automorphism group of $UVB_n$. Lastly, we show that $UVB_n$ and $UVP_n$ are residually finite and Hopfian but not co-Hopfian. We also remark that some of these results hold accordingly for the welded braid group $WB_n$ and we discuss about its automorphism group.
\\\\
\noindent
\textit{2020 Mathematics Subject Classification:} Primary 20F36; Secondary 20F28.\\
 \textit{Keywords:} Welded braid groups; Unrestricted virtual braid groups; Right-angled Artin groups; Automorphism group; Residually finite; Hopfian; Co-hopfian.
\end{abstract}
\maketitle	
	
\section{Introduction}
The group of unrestricted virtual braids, which we will denote throughout this article by $UVB_n$, was introduced by Kauffman and Lambropoulou in \cite{kauffman2004virtual} and \cite{kauffman2006virtual}, where they provide a new method for converting virtual knots and links to virtual braids and they prove a Markov Theorem for the virtual braid groups. The group $UVB_n$ also appears in \cite{kadar2017local} as a quotient of the welded braid group $WB_n$, which is a $3$-dimensional analogue of the Artin braid groups $B_n$,  and moreover in \cite{bardakov2015unrestricted} where Bardakov--Bellingeri--Damiani give a description of the structure of this group. 

A generating set of $UVB_n$ is $\langle  \sigma_1, \dots, \sigma_{n-1}, \rho_1, \dots, \rho_{n-1} \rangle$, where the generators $\sigma_1, \dots, \sigma_{n-1}$ are the standard Artin generators and the generators $\rho_1, \dots, \rho_{n-1}$ generate the symmetric group $S_n$ in $UVB_n$. Consider the map $\phi:UVB_n\to S_n$ defined by $\phi(\sigma_i)=\phi(\rho_i)=(i,i+1)\in S_n$, for $i=1,\dots,n-1$. The unrestricted virtual pure braid group, which we denote by $UVP_n$, is the kernel of the map $\phi$. In [\cite{bardakov2015unrestricted}, Theorem 2.4] it was proved that the group $UVB_n$ is isomorphic to the semi-direct product $UVP_n\rtimes S_n$, where $S_n$ acts by permuting the indices of the generators of $UVP_n$. In [\cite{bardakov2015unrestricted}, Theorem 2.7], Bardakov--Bellingeri--Damiani gave a presentation of the unrestricted virtual pure braid group, from which one can see that $UVP_n$ is isomorphic to the direct product of $n(n-1)/2$ copies of the free group of rank 2. Thus, it follows that the group $UVP_n$ is a right-angled Artin group. For a general survey on the right-angled Artin groups we direct the reader to the article \cite{charney2007introduction} by Charney.

The aim of this paper is to determine all possible homomorphisms, up to conjugation, from $UVB_n$ to the symmetric group $S_n$, and
all possible images of $UVB_n$, under a group homomorphism, to any finite group $G$. In addition, we aim to give a complete description of the automorphism group of the unrestricted virtual pure braid group $UVP_n$, and also to show that the groups $UVB_n$ and $UVP_n$ are Hopfian but not co-Hopfian. The automorphism group of $UVB_n$ has not yet been determined, but we intend to give some partial results about it.

We recall the following definitions, which we use in what follows.
\begin{itemize}
	\item Let $G, H$ be two groups. For every $x\in H$ we have the group homomorphism $h_x:H\to H$, defined by $h_x(y)=xyx^{-1}$.
	Two group homomorphisms $h_1, h_2: G\to H$ are said to be conjugate if there exists an element $x\in H$ such that $h_2=h_x\circ h_1$, which means that $h_2(g)=xh_1(g)x^{-1}$, for every $g\in G$.
	\item A group homomorphism $h:G\to H$ is said to be Abelian if its image $h(G)$ is an Abelian subgroup of $H$.
	\item A group homomorphism $h:G\to H$ is said to be cyclic if its image $h(G)$ is a cyclic subgroup of $H$.
\end{itemize}

Our main results are the following:

Let $\phi$ be the homomorphism $UVB_n\to S_n$ defined in the beginning of this section.
By $v_6$ we denote the outer automorphism of the symmetric group $S_6$. 
\begin{theoremx}\label{th1}
	Let $n\geq 5$ and let $h:UVB_n\to S_n$ be any homomorphism. Then, up to conjugation, one of the following holds:
	\begin{itemize}
		\item The homomorphism $h$ is the homomorphism $\phi$.
		\item The homomorphism $h$ is Abelian.
		\item For $n=6$, the homomorphism $h$ is $v_6\circ\phi$.
	\end{itemize}
\end{theoremx}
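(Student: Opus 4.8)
The plan is to use the semidirect product structure $UVB_n \cong UVP_n \rtimes S_n$, which gives a distinguished copy of $S_n$ inside $UVB_n$, namely $R := \langle \rho_1,\dots,\rho_{n-1}\rangle$, and to analyse $h$ first on $R$ and then on the $\sigma_i$ using the defining (in particular mixed and forbidden) relations. So I would start by setting $\alpha := h|_R : S_n \to S_n$. Since $n \geq 5$, the alternating group $A_n$ is simple, so $\ker\alpha \in \{1, A_n, S_n\}$; hence either $\alpha$ is an automorphism of $S_n$, or $\alpha$ has image of order at most $2$. These two cases are treated separately, and $UVP_n$ itself is not needed — only the presentation of $UVB_n$ in the generators $\sigma_i,\rho_i$.

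\emph{The abelian case.} Suppose the image of $\alpha$ has order $\leq 2$. All the $\rho_i$ are conjugate in $R$, satisfy $\rho_i^2 = 1$ and $\rho_i\rho_{i+1}\rho_i = \rho_{i+1}\rho_i\rho_{i+1}$; since their images pairwise commute, one deduces at once that $h(\rho_i) = \tau$ for all $i$, where $\tau$ is a fixed element with $\tau^2 = 1$. Then the mixed relation $\rho_i\rho_{i+1}\sigma_i = \sigma_{i+1}\rho_i\rho_{i+1}$ yields $h(\sigma_{i+1}) = \tau^2 h(\sigma_i)\tau^2 = h(\sigma_i)$, so all $h(\sigma_i)$ equal a single element $s$, and the relation $\sigma_1\rho_3 = \rho_3\sigma_1$ (valid since $n\geq 5$) gives $s\tau = \tau s$. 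Hence the image of $h$ is $\langle s,\tau\rangle$, which is abelian; this is the third alternative of the theorem, or the second when this image happens to have order $2$.

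\emph{The automorphism case.} Suppose $\alpha$ is an automorphism. The relations $\sigma_1\rho_j = \rho_j\sigma_1$ for $3 \leq j \leq n-1$ show that $h(\sigma_1)$ centralises $\langle h(\rho_3),\dots,h(\rho_{n-1})\rangle = \alpha\big(S_{\{3,\dots,n\}}\big)$. Since $n \geq 5$, the set $\{3,\dots,n\}$ has at least three elements, so $C_{S_n}\big(S_{\{3,\dots,n\}}\big) = S_{\{1,2\}} = \langle(1,2)\rangle$; applying the automorphism $\alpha$ gives $C_{S_n}\big(\alpha(S_{\{3,\dots,n\}})\big) = \alpha(\langle(1,2)\rangle) = \langle h(\rho_1)\rangle$, whence $h(\sigma_1)\in\{1,\,h(\rho_1)\}$. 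Next, the mixed relation rewritten as $\sigma_{i+1} = (\rho_i\rho_{i+1})\,\sigma_i\,(\rho_{i+1}\rho_i)$ gives $h(\sigma_{i+1}) = h(\rho_i)h(\rho_{i+1})\,h(\sigma_i)\,h(\rho_{i+1})h(\rho_i)$; feeding in the two possibilities for $h(\sigma_1)$ and using the Coxeter relations that hold in $R \cong S_n$, one obtains inductively that either $h(\sigma_i) = 1$ for all $i$, or $h(\sigma_i) = h(\rho_i)$ for all $i$. The first alternative is impossible: by a defining relation of $UVB_n$ of the shape $\rho_i\sigma_{i+1}\sigma_i = \sigma_{i+1}\sigma_i\rho_{i+1}$, setting all $\sigma$-generators to $1$ would force $h(\rho_i) = h(\rho_{i+1})$, contradicting injectivity of $\alpha$ on the two distinct transpositions $(i,i+1)$ and $(i+1,i+2)$. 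Therefore $h(\sigma_i) = h(\rho_i) = \alpha\big((i,i+1)\big)$ for all $i$, that is, $h = \alpha\circ\phi$. For $n\neq 6$ every automorphism of $S_n$ is inner, so $h$ is conjugate to $\phi$; for $n = 6$, either $\alpha$ is inner (and $h$ is conjugate to $\phi$) or $\alpha = v_6\circ(\text{inner})$ (and $h$ is conjugate to $v_6\circ\phi$).

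\emph{Main obstacle.} The delicate part is the automorphism case. Its two sensitive points are the centraliser computation $C_{S_n}\big(S_{\{3,\dots,n\}}\big) = S_{\{1,2\}}$ — this, together with the simplicity of $A_n$, is exactly where the hypothesis $n \geq 5$ is used — and the propagation of the value of $h(\sigma_1)$ along the mixed relations followed by the elimination of the ``all trivial'' branch; here it is essential to invoke a genuine relation of $UVB_n$ coupling the $\sigma$'s and $\rho$'s, since the purely mixed relation $\rho_i\rho_{i+1}\sigma_i = \sigma_{i+1}\rho_i\rho_{i+1}$ alone is consistent with $\sigma_i\mapsto 1$. Everything else reduces to bookkeeping with Coxeter relations in $S_n$ and the standard fact that an endomorphism of $S_n$ ($n\geq 5$) is either an automorphism or has image of order at most $2$, together with $\mathrm{Out}(S_n) = 1$ for $n \neq 6$ and $\mathrm{Out}(S_6) = \langle v_6\rangle$.
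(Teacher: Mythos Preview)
Your proof is correct and follows essentially the same strategy as the paper: restrict $h$ to the $\rho$-generated copy of $S_n$, invoke the Artin--Lin classification of endomorphisms of $S_n$ for $n\geq 5$, pin down $h(\sigma_1)$ via the centraliser computation $C_{S_n}(S_{\{3,\dots,n\}})=\langle(1,2)\rangle$, and propagate along the mixed relations. The only cosmetic difference is that you keep the general automorphism $\alpha$ throughout rather than first conjugating it to the identity (or to $v_6$ when $n=6$), and your explicit elimination of the branch $h(\sigma_i)=1$ in the automorphism case is in fact cleaner than the paper's treatment of that sub-case.
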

\noindent
We prove Theorem \ref{th1} in Section \ref{s3}. This result will be the main component in showing that, for $n\geq 5$, $UVP_n$ is a characteristic subgroup of $UVB_n$, see Proposition \ref{char}. Note that in Remark \ref{ex2}, we exhibit an example where we show that this is not the case for $n=2$. Moreover, in Proposition \ref{centralizer}, we show that the centraliser of $UVP_n$ in $UVB_n$ is trivial. 
The results that we obtain in Section \ref{s3} about $UVB_n$ hold also for $WB_n$, using similar arguments, see Remark \ref{sim1}. Moreover, those in Subsection \ref{sub1} about $UVP_n$ hold also for the welded pure braid group $WP_n$, which is the kernel of the map $\phi:WB_n\to S_n$ defined by $\phi(\sigma_i)=\phi(\rho_i)=(i,i+1)\in S_n$, for $i=1,\dots,n-1$, see Remarks \ref{sim2}, \ref{sim3}. Note that $WB_n$ and $UVB_n$ admit the same generating set, the one exhibited in the beginning of the section.
\begin{theoremx}\label{th2}
	Let $n\geq3$ and let $\phi:UVB_n\to G$ be a group homomorphism to a finite group $G$. Then, one of the following holds:
	\begin{itemize}
		\item The homomorphism $\phi$ is Abelian and in particular, $\phi(UVB_n)\cong\mathbb{Z}_m\times\mathbb{Z}_2$, for some $m\in \mathbb{Z}$ ($\mathbb{Z}_1$ is defined as the trivial group).
		\item The homomorphism $\phi$ is non-Abelian and in particular, either $|\phi(UVB_n)|\geq 2^{\frac{n(n-1)}{2}-1}\big(\frac{n(n-1)}{2}\big)!$ or $\phi(UVB_n)\cong\mathbb{Z}_m\times \phi(S_n)$, for some $m\in\mathbb{Z}$.
	\end{itemize}
\end{theoremx}
\noindent
The main tool that we use in order to determine all possible images of $UVB_n$, under a group homomorphism, in any finite group $G$ is the theory of totally symmetric sets, which was introduced by Kordek and Margalit in \cite{kordek2019homomorphisms}. In Section \ref{s4}, we introduce the notion of totally symmetric sets and we exhibit necessary results, which we will use to prove Theorem \ref{th2}. 

\begin{theoremx}\label{th3}
	Let $n\geq 2$ and $1\leq i\neq j\leq n$. It holds that $$Aut(UVP_n)\cong\langle T_{\lambda_{j,i}},\ \mathbb{Z}_2^{n(n-1)/2},\ \mathbb{Z}_2^{n(n-1)/2}\rtimes S_{n(n-1)/2} \rangle,$$
where $T_{\lambda_{j,i}}:\lambda_{i,j}\mapsto \lambda_{i,j}\lambda_{j,i},\ \text{while fixing the rest generators}$, and $S_{n(n-1)/2}$ is the symmetric group of degree $n(n-1)/2$, which acts on $\mathbb{Z}_2^{n(n-1)/2}$ by permuting the components of the product.
\end{theoremx}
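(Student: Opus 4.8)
The plan is to realise $UVP_n$ as a right-angled Artin group and then run through the Servatius--Laurence description of the automorphism group of such a group. By [\cite{bardakov2015unrestricted}, Theorem 2.7] we have $UVP_n\cong\prod_{1\le i<j\le n}F_{i,j}$, a direct product of the $n(n-1)/2$ rank-two free groups $F_{i,j}=\langle\lambda_{i,j},\lambda_{j,i}\rangle$; equivalently $UVP_n=A_\Gamma$, the RAAG on the graph $\Gamma$ with vertex set $\{\lambda_{i,j}:1\le i\ne j\le n\}$ in which $\lambda_{i,j}$ and $\lambda_{k,l}$ are joined by an edge exactly when $\{i,j\}\ne\{k,l\}$. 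Thus $\Gamma$ is the complete multipartite graph $K_{2,\dots,2}$ with $n(n-1)/2$ parts of size two, the parts being the pairs $\{\lambda_{i,j},\lambda_{j,i}\}$. I would then recall (see \cite{charney2007introduction}) that $Aut(A_\Gamma)$ is generated by four families of automorphisms: the graph automorphisms induced by $Aut(\Gamma)$; the inversions $v\mapsto v^{-1}$; the dominated transvections $v\mapsto vw$, admissible precisely when $\mathrm{lk}(v)\subseteq\mathrm{st}(w)$; and the partial conjugations. The proof then consists of computing each family for this particular $\Gamma$ and checking that all four are captured by the three families in the statement.

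Carrying out the computation: in the complete multipartite graph $K_{2,\dots,2}$ two vertices are non-adjacent precisely when they lie in the same part, so a graph automorphism may only permute the $n(n-1)/2$ parts and transpose the two vertices within each part; hence $Aut(\Gamma)\cong\mathbb{Z}_2\wr S_{n(n-1)/2}=\mathbb{Z}_2^{n(n-1)/2}\rtimes S_{n(n-1)/2}$, the third family in the statement. The $n(n-1)$ inversions give $\mathbb{Z}_2^{n(n-1)}$, the second family. For the transvections, writing $x'$ for the partner of a vertex $x$ in its part, one has $\mathrm{st}(w)=V\setminus\{w'\}$ and $\mathrm{lk}(v)=V\setminus\{v,v'\}$, so the condition $\mathrm{lk}(v)\subseteq\mathrm{st}(w)$ with $w\ne v$ forces $w=v'$; thus the only admissible transvections are those internal to a single factor $F_{i,j}$, and every one of them is obtained from $T_{\lambda_{j,i}}$ by conjugating with graph automorphisms (to pass between factors and to interchange $\lambda_{i,j}$ with $\lambda_{j,i}$) and with inversions. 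Finally, for a partial conjugation by a vertex $v$ one has $\Gamma\setminus\mathrm{st}(v)=\{v'\}$, a single vertex; so the partial conjugation sends $v'\mapsto vv'v^{-1}$ and fixes every other generator, i.e.\ it is the inner automorphism of the factor $\langle v,v'\rangle\cong F_2$ by $v$, extended by the identity on the other factors. Since $Aut(F_2)$ is generated by a transvection, the two single-generator inversions, and the swap of its two generators (the Nielsen generators in rank two), each such inner automorphism --- and indeed all of $\prod_{i<j}Aut(F_{i,j})$, after using the $S_{n(n-1)/2}$-action to move between factors --- already lies in the subgroup generated by the first three families. Assembling the four computations gives the asserted generating set, and simultaneously identifies $Aut(UVP_n)$ with $Aut(F_2)\wr S_{n(n-1)/2}$, with $S_{n(n-1)/2}$ permuting the free factors.

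I expect the difficulty to be bookkeeping rather than anything conceptual. The points that need care are: confirming that the commutation graph of the $\lambda_{i,j}$ is exactly $K_{2,\dots,2}$ and computing its automorphism group; the link/star computation that isolates the admissible transvections; and, above all, verifying that the short list in the statement really recaptures all four Servatius--Laurence families --- in particular that the partial conjugations are redundant, which rests on the classical Nielsen generation of $Aut(F_2)$, and that the single transvection $T_{\lambda_{j,i}}$, together with the inversions and the graph symmetries, already yields every internal transvection and hence all of $Aut(F_{i,j})$ on each factor. One could instead bypass the RAAG machinery by arguing directly that every automorphism of a finite direct product of centreless directly indecomposable groups permutes the factors (a Krull--Remak--Schmidt type fact, available here since $Z(UVP_n)=1$ and each $F_{i,j}$ is directly indecomposable), which gives $Aut(UVP_n)\cong Aut(F_2)\wr S_{n(n-1)/2}$ at once; one would then only need to exhibit a generating set of $Aut(F_2)$.
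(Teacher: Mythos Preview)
Your proposal is correct and follows essentially the same route as the paper: both realise $UVP_n$ as the right-angled Artin group on the cocktail-party graph $K_{2,\dots,2}$ and run the Servatius--Laurence generating families, with the same link/star computation isolating the admissible transvections and the same observation that the locally-inner/partial-conjugation family is redundant because each $\Gamma\setminus\mathrm{st}(v)$ is a single vertex. Your identification of $Aut(\Gamma)$ as the wreath product $\mathbb{Z}_2\wr S_{n(n-1)/2}$ and of $Aut(UVP_n)$ as $Aut(F_2)\wr S_{n(n-1)/2}$ matches the paper's Corollary immediately following the theorem, and your suggested Krull--Remak--Schmidt shortcut is exactly the alternative the paper alludes to via the Zhang--Ventura--Wu result.
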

\noindent
In Section \ref{s5}, we introduce the notion of right-angled Artin groups and the theory around the automorphisms of graph groups. Laurence \cite{laurence1995generating}, who extended the work of Servatius \cite{servatius1989automorphisms}, provided a complete set of generators for the automorphism group of any graph group. We will use this result to prove Theorem \ref{th3}.

Finally, in Section \ref{s6}, we give some partial results about the automorphism group of $UVB_n$, see Corollary \ref{colrose}, as well as the following proposition.

Let $\beta_n, \gamma_n:UVB_n\to UVB_n$ such that, for $1\leq i\leq n-1$, $\beta_n:\sigma_i\mapsto\sigma_i^{-1},\ \rho_i\mapsto\rho_i$ and $\gamma_n:\sigma_i\mapsto\rho_i\sigma_i\rho_i,\ \rho_i\mapsto\rho_i$.
\begin{propositionx}\label{prop1}
	Let $n\geq 3$. It holds that $$\langle\beta_n, \gamma_n\rangle \subseteq  Out(UVB_n),\ \text{where}\ \langle\beta_n, \gamma_n\rangle\cong \mathbb{Z}_2\times\mathbb{Z}_2.$$
\end{propositionx}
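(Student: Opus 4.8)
The plan is to first verify, by a direct computation on the generators $\sigma_1,\dots,\sigma_{n-1},\rho_1,\dots,\rho_{n-1}$, that $\beta_n^2=\gamma_n^2=\mathrm{id}$ and $\beta_n\gamma_n=\gamma_n\beta_n$ hold in $Aut(UVB_n)$, so that $\langle\beta_n,\gamma_n\rangle$ is a quotient of $\mathbb{Z}_2\times\mathbb{Z}_2$. Everything then comes down to showing that none of the three elements $\beta_n$, $\gamma_n$, $\beta_n\gamma_n$ is an inner automorphism of $UVB_n$: once this is known, these three elements are automatically non-trivial and pairwise distinct (if, say, $\beta_n=\gamma_n$, then $\beta_n\gamma_n=\mathrm{id}$ would be inner), so $\langle\beta_n,\gamma_n\rangle\cong\mathbb{Z}_2\times\mathbb{Z}_2$ and $\langle\beta_n,\gamma_n\rangle\cap Inn(UVB_n)=\{\mathrm{id}\}$, which is precisely the statement of the proposition. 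I will also keep in mind that, by Proposition \ref{centralizer}, $Z(UVB_n)=1$, so that $Inn(UVB_n)\cong UVB_n$; this is not strictly necessary but clarifies the picture.

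For two of the three elements I would use the ``exponent-sum'' homomorphism $e\colon UVB_n\to\mathbb{Z}$ sending each $\sigma_i$ to $1$ and each $\rho_i$ to $0$; this is well defined because every defining relation of $UVB_n$ — the braid relations among the $\sigma_i$, the relations of $S_n$ among the $\rho_i$, the mixed relations, and both forbidden relations — carries the same total $\sigma$-exponent on each side. Since $\mathbb{Z}$ is abelian, $e$ is constant on conjugacy classes, so $e\circ c=e$ for every inner automorphism $c$. From the definitions one reads off $e\circ\beta_n=-e$ and $e\circ\gamma_n=e$, hence also $e\circ(\beta_n\gamma_n)=-e$; consequently neither $\beta_n$ nor $\beta_n\gamma_n$ is inner.

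For $\gamma_n$ the argument must be finer, and this is where the hypothesis $n\ge 3$ enters. I would restrict attention to the normal subgroup $UVP_n=\ker\phi$, which $\gamma_n$ preserves (this is immediate from the definition of $\gamma_n$), and compare the action of $\gamma_n$ with that of an arbitrary inner automorphism $c_g\colon x\mapsto gxg^{-1}$ on $H_1(UVP_n)\cong\mathbb{Z}^{n(n-1)}$, whose basis is the family of classes $[\lambda_{i,j}]$, $1\le i\ne j\le n$. Writing $g=q\,w$ with $q\in UVP_n$ and $w\in S_n$, using the decomposition $UVB_n\cong UVP_n\rtimes S_n$ of [\cite{bardakov2015unrestricted}, Theorem 2.4] together with the fact that $S_n$ acts on $UVP_n$ by permuting the indices of the $\lambda_{i,j}$, and using that conjugation by $q\in UVP_n$ acts trivially on $H_1(UVP_n)$, one sees that $c_g$ acts on $H_1(UVP_n)$ by the basis permutation $[\lambda_{i,j}]\mapsto[\lambda_{w(i),w(j)}]$. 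On the other hand, $\gamma_n$ acts on $H_1(UVP_n)$ by the involution $[\lambda_{i,j}]\mapsto[\lambda_{j,i}]$ (indeed $\gamma_n$ swaps the two free generators in each of the $\binom n2$ free factors of $UVP_n$). If this involution equalled $[\lambda_{i,j}]\mapsto[\lambda_{w(i),w(j)}]$ for some $w\in S_n$, then $w(i)=j$ and $w(j)=i$ for every ordered pair $i\ne j$; taking $(i,j)=(1,2)$ and then $(i,j)=(1,3)$ — the latter being possible exactly because $n\ge 3$ — would force $w(1)=2$ and $w(1)=3$, a contradiction. Hence $\gamma_n$ is not inner, which finishes the proof.

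The step I expect to be the main obstacle is the identification, in the last paragraph, of the action of $\gamma_n$ (and, for the exponent-sum step, of $\beta_n$) on the generators $\lambda_{i,j}$ of $UVP_n$ starting from their prescriptions on $\sigma_i$ and $\rho_i$: this requires the explicit dictionary between the two generating systems of $UVP_n$, together with an $S_n$-equivariance argument to reduce from the ``standard'' generators $\lambda_{k,k+1},\lambda_{k+1,k}$ to all the $\lambda_{i,j}$. I note that only the normality of $UVP_n$ is invoked here, not that it is characteristic, so there is no circularity with Proposition \ref{char} and the cases $n=3,4$ are covered. The bound $n\ge 3$ is sharp: for $n=2$ the swap $[\lambda_{1,2}]\leftrightarrow[\lambda_{2,1}]$ is induced by $w=(1\,2)\in S_2$ — in fact $\gamma_2$ is conjugation by $\rho_1$ — so only $\langle\bar\beta_2,\bar\gamma_2\rangle\cong\mathbb{Z}_2$ survives in $Out(UVB_2)$.
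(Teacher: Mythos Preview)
Your proposal is correct and follows essentially the same strategy as the paper: the abelianisation of $UVB_n$ (your exponent-sum map $e$) disposes of $\beta_n$ and $\beta_n\gamma_n$, while for $\gamma_n$ both you and the paper pass to the action on $H_1(UVP_n)\cong\mathbb{Z}^{n(n-1)}$, use the semi-direct product decomposition $UVB_n\cong UVP_n\rtimes S_n$ to reduce an inner automorphism to a permutation of indices, and then derive the contradiction $w(1)=2$ and $w(1)=3$ for $n\ge 3$. Your write-up is in fact slightly cleaner in that you work directly in $H_1(UVP_n)$ rather than arguing case-by-case on the decomposition $g=\Lambda S$, but the underlying argument is the same.
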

\noindent
Note that in \cite{paris}, Bellingeri--Paris proved, for the virtual braid groups $VB_n$, that $Out(VB_n)\cong \mathbb{Z}_2\times\mathbb{Z}_2$. We will not define here the virtual braid groups, but we could say that they are an extension of the classical braid groups by the symmetric
group. Since it holds for the welded group $WB_n$ that it is a quotient of the virtual group $VB_n$ and $UVB_n$ a quotient of $WB_n$, we thus speculate that the result by Bellingeri--Paris about $Out(VB_n)$ together with our partial result about $Out(UVB_n)$ could be of help for future work in determining the group $Out(WB_n)$, which is still an open problem. In Section \ref{s6}, we discuss about $Aut(WB_n)$, and we conjecture that, for $n\geq 3$, $Out(WB_n)\cong\mathbb{Z}_2$. We conclude this section with Corollary \ref{hop}, where we show that $UVB_n$ and $UVP_n$ are residually finite and Hopfian but not co-Hopfian.

\section{Homomorphisms from $UVB_n$ to the symmetric group $S_n$ and the group $UVP_n$}\label{s3}
In order to define the unrestricted virtual braid groups, we will first
introduce welded braid groups by simply recalling their group presentation. Nevertheless, for other more substantial definitions of welded braid groups, we refer the reader, for instance, to \cite{brendle2013configuration} and \cite{fenn1997braid}.

\begin{definition}\label{def1}
	Let $n\in \mathbb{N}$. The welded braid group $WB_n$ is defined by the group presentation
	$$\langle \sigma_1, \dots, \sigma_{n-1}, \rho_1, \dots, \rho_{n-1}\ |\ R  \rangle,$$
	where $R$ is the set of the following relations:
	\begin{enumerate}[label=(\subscript{R}{{\arabic*}})]
		\item $\sigma_i\sigma_{i+1}\sigma_i=\sigma_{i+1}\sigma_i\sigma_{i+1}$, for $i=1, \dots, n-2$,
		\item $\sigma_i\sigma_j=\sigma_j\sigma_i$, for $|i-j|>1$,  where $1\leq i,j\leq n-1$,
		\item $\rho_i\rho_{i+1}\rho_i=\rho_{i+1}\rho_i\rho_{i+1}$, for $i=1, \dots, n-2$,
		\item $\rho_i\rho_j=\rho_j\rho_i$, for $|i-j|>1$, where $1\leq i,j\leq n-1$, 
		\item $\rho_i^2=1$, for $i=1, \dots, n-1$,
		\item $\sigma_i\rho_j=\rho_j\sigma_i$, for $|i-j|>1$,  where $1\leq i,j\leq n-1$,
		\item $\sigma_i\rho_{i+1}\rho_i=\rho_{i+1}\rho_i\sigma_{i+1}$, for $i=1, \dots, n-2$,\label{rel7}
		\item $\rho_i\sigma_{i+1}\sigma_i=\sigma_{i+1}\sigma_i\rho_{i+1}$, for $i=1, \dots, n-2$.\label{rel8}
	\end{enumerate}
\end{definition}
\begin{remark}
Note that, for $i=1, \dots, n-2$, the symmetrical relations of \ref{rel7}, $\rho_i\rho_{i+1}\sigma_i=\sigma_{i+1}\rho_i\rho_{i+1}$, also hold in $WB_n$.
\end{remark}

In [\cite{bardakov2015unrestricted}, Remark 2.2], we see that, for $i=1, \dots, n-2$, the symmetrical relations of \ref{rel8},
$\rho_{i+1}\sigma_i\sigma_{i+1}=\sigma_i\sigma_{i+1}\rho_i$, do not hold in $WB_n$. The unrestricted virtual braid group $UVB_n$ is defined so that these symmetrical relations hold.

\begin{definition}\label{defuvbn}
	Let $n\in \mathbb{N}$. The unrestricted virtual braid group $UVB_n$ is defined as the quotient of $WB_n$, which is defined in Definition \ref{def1}, by the following relation:
	$$(R_9)\ \rho_{i+1}\sigma_i\sigma_{i+1}=\sigma_i\sigma_{i+1}\rho_i,\ \text{for}\ i=1, \dots, n-2.$$
		
\end{definition}

\begin{remark}\label{useouter}
	Let $n\geq 2$. Based on the presentation of $UVB_n$, Definition \ref{defuvbn}, it follows that the Abelianisation of $UVB_n$ is isomorphic to $\mathbb{Z}\times\mathbb{Z}_2$, where $\mathbb{Z}$ is generated by the image of $\sigma_1$ and $\mathbb{Z}_2$ is generated by the image of $\rho_1$.
\end{remark}

It is well known that the symmetric group $S_6$ has an outer automorphism, which we will denote by $v_6$, unlike all other symmetric groups.
Due to Artin, \cite{artin1947braids} and Lin, \cite{lin2004braids}, the following known result can be deduced.
\begin{proposition}\label{sn}
	Let $n,m\in \mathbb{Z}$ with $n\geq m$, such that $n\geq5$, $m\geq2$. For any homomorphism $h:S_n\to S_m$ one of the following holds.
	\begin{enumerate}
		\item The homomorphism $h$ is Abelian and therefore cyclic.
		\item For $n=m$ the homomorphism $h$ is, up to conjugation, the identity.
		\item For $n=m=6$ the homomorphism $h$ is, up to conjugation, $v_6$.
	\end{enumerate}
\end{proposition}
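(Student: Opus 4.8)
The plan is to reduce everything to the normal-subgroup structure of $S_n$ for $n \geq 5$, together with the classical description of $Aut(S_n)$. First I would recall that, since $A_n$ is simple for $n\geq 5$, the only normal subgroups of $S_n$ are $\{1\}$, $A_n$ and $S_n$. Applying this to $\ker h$ leaves exactly three cases to examine.

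If $\ker h = S_n$, then $h$ is the trivial homomorphism, which is Abelian; and if $\ker h = A_n$, then $h(S_n)\cong S_n/A_n\cong\mathbb{Z}_2$ is cyclic, so $h$ is again Abelian. In both subcases conclusion (1) holds, so from now on I would assume $\ker h=\{1\}$, that is, $h$ is injective. Injectivity forces $|S_m|=m!\geq n!=|S_n|$, which combined with the hypothesis $n\geq m$ gives $m=n$. Then $h$ is an injective endomorphism of the finite group $S_n$, hence an automorphism of $S_n$.

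It remains to analyse $Aut(S_n)$. For $n\neq 6$ every automorphism of $S_n$ is inner, so $h=h_x$ for some $x\in S_n$; by the definition of conjugacy of homomorphisms this says precisely that $h$ is conjugate to the identity, which is conclusion (2). For $n=6$ one has $Aut(S_6)=Inn(S_6)\sqcup Inn(S_6)\,v_6$; hence either $h\in Inn(S_6)$, giving conclusion (2) as above, or $hv_6^{-1}\in Inn(S_6)$, say $hv_6^{-1}=h_x$, so that $h=h_x\circ v_6$ is conjugate to $v_6$, which is conclusion (3).

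The argument carries no genuine obstacle: it is a short case analysis once one invokes the two standard structural facts about the symmetric groups — the classification of their normal subgroups and the computation of their outer automorphism groups — which are exactly the inputs furnished by the results of Artin and Lin cited above; alternatively, these facts are entirely classical and may be quoted directly.
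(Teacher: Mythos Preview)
Your proof is correct. The paper itself does not supply a proof of this proposition: it merely states that the result ``can be deduced'' from work of Artin and Lin and moves on to use it as a black box in the proof of Theorem~\ref{th1}. Your argument is therefore not so much a different route as a route where the paper gives none; it is the standard short derivation from two classical facts --- the normal-subgroup lattice of $S_n$ for $n\geq 5$ and the computation of $Out(S_n)$ --- and is entirely adequate here. One small cosmetic point: when you write $Aut(S_6)=Inn(S_6)\sqcup Inn(S_6)\,v_6$, you are implicitly using that $Inn(S_6)$ is normal in $Aut(S_6)$ so that the right coset equals the left coset; this is of course true, but it would do no harm to say so.
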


We define the following maps that we will use in the proof of Theorem \ref{th1}.

\begin{itemize}
	\item Let $\alpha$ be the homomorphism $S_n\to UVB_n$ defined by $\alpha(s_i)=\rho_i$, for $1\leq i\leq n-1$.
	\item Let $\phi$ be the homomorphism $UVB_n\to S_n$ defined by $\phi(\sigma_i)=\phi(\rho_i)=s_i$, where $s_i=(i, i+1)\in S_n$, for every $1\leq i\leq n-1$.
\end{itemize}

\begin{proof}[Proof of Theorem \ref{th1}]
	For $n\geq 5$, let $h: UVB_n\to S_n$ be any homomorphism. 
	From Proposition \ref{sn}, the composition map $h\circ\alpha:S_n\to S_n$ is, up to conjugation, either Abelian or the identity homomorphism, and only in the case $n=6$ we could also have that $h\circ\alpha$ is the homomorphism $v_6$. We will examine these cases separately. 
	
	Suppose that $h\circ\alpha$ is the identity homomorphism. We have that
	$(h\circ\alpha)(s_i)=h(\alpha(s_i))=h(\rho_i)=s_i$, for $1\leq i\leq n-1$. It follows that $h(\rho_i)=s_i$, for $1\leq i\leq n-1$. Moreover, from relation $\rho_i\sigma_j=\sigma_j\rho_i,\ |i-j|>1$, we get that $h(\sigma_1)=h(\rho_i^{-1}\sigma_1\rho_i)=s_i^{-1}h(\sigma_1)s_i$, for $3\leq i\leq n-1$, which means that $s_ih(\sigma_1)=h(\sigma_1)s_i$, for $3\leq i\leq n-1$. Thus, $h(\sigma_1)$ belongs to the centraliser of $\langle s_3,\dots,s_{n-1}\rangle$ inside $S_n$, but this centraliser is $\{1, s_1\}$. As a result, either $h(\sigma_1)=1$ or $h(\sigma_1)=s_1$. We shall check each case separately.
	
	Suppose that $h(\sigma_1)=1$. In this case, from $\sigma_i=(\sigma_1\dots\sigma_{n-1})^{i-1}\sigma_1(\sigma_1\dots\sigma_{n-1})^{1-i}$, we have that $h(\sigma_i)=1, \ 1\leq i\leq n-1$. Therefore, we get $h(\rho_i)=s_i$ and $h(\sigma_i)=1$, for $1\leq i\leq n-1$. Now, from relation $\rho_i\sigma_{i+1}\sigma_i=\sigma_{i+1}\sigma_i\rho_{i+1}$, for $1\leq i\leq n-2$, it follows that $h(\rho_i)h(\sigma_{i+1})h(\sigma_i)=h(\sigma_{i+1})h(\sigma_i)h(\rho_{i+1})$. Thus, $h(\rho_i)=h(\rho_{i+1})=s_i=s_{i+1}$, for $1\leq i\leq n-2$, which leads to a contradiction since we have assumed that $h(\rho_i)=s_i$, for $1\leq i\leq n-1$.
	
	Suppose that $h(\sigma_1)=s_1$. By induction we can show that $h(\sigma_i)=s_i$, for $1\leq i\leq n-1$. For $i=1$ the hypothesis holds. Suppose that for some $i\geq 2$ we have $h(\sigma_i)=s_i$.
	From relation $\rho_{i+1}\rho_i\sigma_{i+1}=\sigma_i\rho_{i+1}\rho_i$ it follows that $h(\sigma_{i+1})=h(\rho_i)^{-1}h(\rho_{i+1})^{-1}h(\sigma_i)h(\rho_{i+1})h(\rho_i)$. Thus, $h(\sigma_{i+1})=s_i^{-1}s_{i+1}^{-1}s_is_{i+1}s_i=s_{i+1}$, which completes the induction. As a result we have that $h(\rho_i)=s_i$ and $h(\sigma_i)=s_i$, for $1\leq i\leq n-1$. This implies that $h$ is, up to conjugation, the homomorphism $\phi$.
	
	We will consider now the case where the homomorphism $h\circ\alpha:S_n\to S_n$ is Abelian, and since, for $1\leq i\leq n-1$, it holds that $s_is_{i+1}s_i=s_{i+1}s_is_{i+1}$ in $S_n$, it follows that $h\circ\alpha$ is also cyclic. Thus, for $1\leq i\leq n-1$, $(h\circ\alpha)(s_i)=w$, for an element $w\in S_n$, with $w^2=1$. Therefore, $(h\circ\alpha)(s_i)=h(\alpha(s_i))=h(\rho_i)=w$, which means that, for $1\leq i\leq n-1$, $h(\rho_i)=w$, where $w^2=1$. Relation $\sigma_i\rho_{i+1}\rho_i=\rho_{i+1}\rho_i\sigma_{i+1}$, under $h$, becomes $h(\sigma_i)w^2=w^2h(\sigma_{i+1})$, and therefore, for $1\leq i\leq n-1$, $h(\sigma_i)=h(\sigma_{i+1})$. Thus, we have $h(\sigma_1)=\dots=h(\sigma_{n-1})=:\tau \in S_n$.
	Moreover, from relation $\rho_i\sigma_j=\sigma_j\rho_i$, for $|i-j|>1$ and $1\leq i, j\leq n-1$, we get $h(\rho_i)h(\sigma_j)=h(\sigma_j)h(\rho_i)$, which implies that $w\tau=\tau w$.
	As a result we have that the image of the homomorphism $h$ is the Abelian group generated by the elements $w$ and $\tau$, with $w^2=1$.
	
	Lastly, suppose that $n=6$ and that the homomorphism $h\circ\alpha:S_6\to S_6$ is, up to conjugation, the homomorphism $v_6$; $h\circ\alpha=v_6$. The map $v_6^{-1}\circ h\circ\alpha$ becomes the identity homomorphism $S_6\to S_6$ and it follows that $(v_6^{-1}\circ h\circ\alpha)(s_i)=(v_6^{-1}\circ h)(\rho_i)=s_i$, for $1\leq i\leq 5$. Using relation $\rho_i\sigma_j=\sigma_j\rho_i,\ |i-j|>1$ we get that, for $i\in\{3,4,5\}$, $(v_6^{-1}\circ h)(\sigma_1)=(v_6^{-1}\circ h)(\rho_i^{-1}\sigma_1\rho_i)=
	s_i^{-1}(v_6^{-1}\circ h)(\sigma_1)s_i$, and we obtain $s_i(v_6^{-1}\circ h)(\sigma_1)=(v_6^{-1}\circ h)(\sigma_1)s_i$. In other words, we have that $(v_6^{-1}\circ h)(\sigma_1)$ belongs to the centraliser of $\langle s_3,s_4,s_5\rangle$ in $S_6$, but the centraliser of $\langle s_3,s_4,s_5\rangle$ in $S_6$ is $\{1,s_1\}$. Therefore, either $(v_6^{-1}\circ h)(\sigma_1)=1$ or $(v_6^{-1}\circ h)(\sigma_1)=s_1$.  
	Following the same arguments as before, in the case where $(v_6^{-1}\circ h)(\sigma_1)=1$, we conclude that, for $1\leq i\leq n-2$,  $(v_6^{-1}\circ h)(\sigma_i)=1$ and that $(v_6^{-1}\circ h)(\rho_i)=(v_6^{-1}\circ h)(\rho_{i+1})=s_i=s_{i+1}$. Thus, $(v_6^{-1}\circ h)$ is a cyclic homomorphism, and as a result $h$ is again a cyclic homomorphism, whose image is of order $2$. In the case where $(v_6^{-1}\circ h)(\sigma_1)=s_1$, it follows that $(v_6^{-1}\circ h)(\sigma_i)=s_i$, and as a result $(v_6^{-1}\circ h)=\phi$, since we already have that $(v_6^{-1}\circ h)(\rho_i)=s_i$, for $1\leq i\leq 5$. All together we get that the homomorphism $h$ can be $v_6\circ \phi$.
\end{proof}

\begin{remark}\label{r1}
	For $n=3, 4$, to determine all possible homomorphisms from $UVB_n$ to $S_n$ seems to be trickier. Note that for $n=2$ it holds that $UVB_2=\langle\sigma_1, \rho_1 \ |\ \rho_1^2=1\rangle= WB_2$, which is isomorphic to $\mathbb{Z}\ast\mathbb{Z}_2$, and thus the image of any homomorphism $h: UVB_2\to S_2$ is either the trivial group or $S_2$.
\end{remark}

\begin{remark}\label{surj}
	The only possible surjective homomorphisms from $UVB_n$ to $S_n$ , for $n\geq 5$, are the homomorphism $\phi$ and also the homomorphism $v_6\circ\phi$ when $n=6$.
\end{remark}

\begin{remark}\label{sim1}
Note that, using similar arguments, Theorem \ref{th1}, Remark \ref{r1} and \ref{surj} hold also for $WB_n$.
\end{remark}
\subsection{The unrestricted virtual pure braid group $UVP_n$}\label{sub1}
\hfill\\

We shall study now, a subgroup of $UVB_n$, the unrestricted virtual pure braid group.
We recall that $\phi:UVB_n\to S_n$ is defined by $\phi(\sigma_i)=\phi(\rho_i)=(i,i+1)\in S_n$, for $i=1,\dots,n-1$. The unrestricted virtual pure braid group, which we denote by $UVP_n$, is the kernel of the map $\phi$. 

In \cite{bardakov2015unrestricted}, Bardakov--Bellingeri--Damiani defined the following elements of $UVP_n$:
\begin{equation}\label{lamda}
\begin{aligned}
&\lambda_{i,i+1}=\rho_i\sigma_i^{-1},\ \text{for}\ i=1,\dots,n-1,\\
&\lambda_{i+1,i}=\sigma_i^{-1}\rho_i,\ \text{for}\ i=1,\dots,n-1,\\
&\lambda_{i,j}=\rho_{j-1}\rho_{j-2}\dots\rho_{i+1}\lambda_{i,i+1}\rho_{i+1}\dots\rho_{j-2}\rho_{j-1},\ \text{for}\ 1\leq i<j-1\leq n-1,\\
&\lambda_{j,i}=\rho_{j-1}\rho_{j-2}\dots\rho_{i+1}\lambda_{i+1,i}\rho_{i+1}\dots\rho_{j-2}\rho_{j-1},\ \text{for}\ 1\leq i<j-1\leq n-1.
\end{aligned}
\end{equation}
Moreover, they gave a presentation of $UVP_n$ as follows.

\begin{theorem}[Bardakov--Bellingeri--Damiani, \cite{bardakov2015unrestricted}]\label{tss}
	Let $n\in \mathbb{N}$. The group $UVP_n$ admits the following presentation:\\
	Generators: $\lambda_{i,j}$, for $1\leq i\neq j\leq n$.\\
	Relations: The generators pairwise commute except for the couples $\lambda_{i,j}, \lambda_{j,i}.$ 
\end{theorem}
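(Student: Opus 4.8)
The plan is to derive the presentation by the Reidemeister--Schreier rewriting process applied to the finite-index subgroup $UVP_n=\ker\phi$ of $UVB_n$, using the homomorphic section $\alpha\colon S_n\to UVB_n$, $s_i\mapsto\rho_i$ (which is a homomorphism precisely because of the Coxeter relations $\rho_i\rho_{i+1}\rho_i=\rho_{i+1}\rho_i\rho_{i+1}$, $\rho_i\rho_j=\rho_j\rho_i$ and $\rho_i^2=1$ among the $\rho$'s). First I would fix for every $\pi\in S_n$ a reduced word $w_\pi$ in the $s_i$, chosen so that $\{w_\pi:\pi\in S_n\}$ is prefix-closed, and take the Schreier transversal $T=\{\alpha(w_\pi):\pi\in S_n\}$, of cardinality $n!$. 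Since $\alpha$ is a homomorphism, $\alpha(w_\pi)\,\rho_i=\alpha(w_{\pi s_i})$ as elements of $UVB_n$, so the transversal representative of the coset $UVP_n\,\alpha(w_\pi)\rho_i$ is $\alpha(w_{\pi s_i})$ itself; consequently every Reidemeister--Schreier generator $t\rho_i\,\overline{t\rho_i}^{\,-1}$ with $t\in T$ is trivial.

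Next I would pin down the surviving generators. For $t=\alpha(w_\pi)$ one computes $t\sigma_i\,\overline{t\sigma_i}^{\,-1}=\alpha(w_\pi)(\sigma_i\rho_i^{-1})\alpha(w_\pi)^{-1}=\alpha(w_\pi)\,\lambda_{i,i+1}^{-1}\,\alpha(w_\pi)^{-1}$, using $\lambda_{i,i+1}=\rho_i\sigma_i^{-1}$. Thus $UVP_n$ is generated by the $\alpha(S_n)$-conjugates of $\lambda_{1,2},\dots,\lambda_{n-1,n}$. A short computation with the mixed relations $(R_6)$--$(R_9)$ and $\rho_i^2=1$ shows that conjugation of $\lambda_{i,i+1}$ by $\rho_i$ yields $\lambda_{i+1,i}$ and by $\rho_k$ with $k\notin\{i-1,i,i+1\}$ fixes it, so this conjugacy orbit is exactly the set $\{\lambda_{a,b}:1\le a\ne b\le n\}$ of (\ref{lamda}) and the $S_n$-action on it is by permutation of indices; after Tietze simplification these $n!(n-1)$ Reidemeister--Schreier generators collapse to the $n(n-1)$ symbols $\lambda_{a,b}$.

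It then remains to rewrite the defining relations of $UVB_n$ --- one instance for each $t\in T$ --- in terms of the $\lambda_{a,b}$. The relations $(R_3)$--$(R_5)$ among the $\rho$'s rewrite to the empty word; for $(R_1)$, $(R_2)$, $(R_6)$--$(R_9)$ I would apply the rewriting map and conjugate each resulting relator back by the appropriate element of $\alpha(S_n)$, expecting every one to reduce to a commutator $[\lambda_{a,b},\lambda_{c,d}]$ with $\{a,b\}\ne\{c,d\}$, and conversely every such commutator to occur. To keep the bookkeeping finite I would invoke $UVB_n\cong UVP_n\rtimes S_n$ to carry out the rewriting only on a set of $S_n$-orbit representatives of the relations, so that just $O(n)$ instances of each relation family are treated directly and the rest follow from the index-permuting action; in parallel I would check straight in $UVB_n$ that $\lambda_{a,b}$ and $\lambda_{c,d}$ commute whenever $\{a,b\}\ne\{c,d\}$, which bounds the relation set from above and matches it with the rewritten one.

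The main obstacle is exactly this last step: one must make sure that the rewriting of the ``mixing'' relations $(R_7)$, $(R_8)$, $(R_9)$ produces only commutators between \emph{distinct} unordered pairs, and in particular never imposes $[\lambda_{i,j},\lambda_{j,i}]=1$ --- i.e.\ that the two generators attached to each pair $\{i,j\}$ generate a free group of rank $2$ rather than $\mathbb{Z}^2$. A way to sidestep the heaviest calculation is the reverse construction: let $\widehat P$ be the group with the claimed presentation, observe that permuting indices defines an $S_n$-action on $\widehat P$ (the defining relations being $S_n$-equivariant), set $\Phi\colon UVB_n\to\widehat P\rtimes S_n$ by $\rho_i\mapsto s_i$ and $\sigma_i\mapsto\lambda_{i,i+1}^{-1}s_i$, verify the nine relations of $UVB_n$ hold for these images, note that $\Phi$ is onto and commutes with the projections to $S_n$, and finally show the induced homomorphism $UVP_n\to\widehat P$ is injective by comparing it with the Reidemeister--Schreier generating set above; the two directions together give the asserted isomorphism.
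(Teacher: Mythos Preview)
The present paper does not give a proof of this statement: it is quoted verbatim as a result of Bardakov--Bellingeri--Damiani \cite{bardakov2015unrestricted}, so there is no ``paper's own proof'' to compare against. Your Reidemeister--Schreier outline is precisely the method used in that original reference, and the structure you describe (Schreier transversal $T=\alpha(S_n)$, trivial generators coming from the $\rho_i$, the $\sigma_i$-generators collapsing under the $S_n$-action to the set $\{\lambda_{a,b}\}$) is correct.

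One point deserves tightening. In your closing paragraph you propose to avoid the full rewriting of relations by building the homomorphism $\Phi\colon UVB_n\to\widehat P\rtimes S_n$ and then showing the induced map $UVP_n\to\widehat P$ is injective ``by comparing it with the Reidemeister--Schreier generating set above''. Matching generating sets alone does not yield injectivity; what actually closes the argument is the pair of opposite surjections. Concretely: once you have verified in $UVB_n$ that $[\lambda_{a,b},\lambda_{c,d}]=1$ whenever $\{a,b\}\neq\{c,d\}$ (which you mention), you get a surjection $\widehat P\twoheadrightarrow UVP_n$; and once you have checked that $\Phi(\rho_i)=s_i$, $\Phi(\sigma_i)=\lambda_{i,i+1}^{-1}s_i$ respects all nine relation families of $UVB_n$ in $\widehat P\rtimes S_n$, you get a surjection $UVP_n\twoheadrightarrow\widehat P$ in the other direction. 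Their composite is the identity on generators of $\widehat P$, hence both are isomorphisms. Stated this way the Reidemeister--Schreier machinery is needed only for the generating set, not for the relations, and the ``heaviest calculation'' you flag is genuinely bypassed. This is exactly the two-map strategy carried out in \cite{bardakov2015unrestricted}.
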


\begin{remark}\label{free}
	The group $$UVP_n=\langle\lambda_{i,j}, \ 1\leq i\ne j \leq n \ | \ \lambda_{i,j}\lambda_{k,l}=\lambda_{k,l}\lambda_{i,j}, \ \text{for}\ (k,l)\neq(j,i),\ 1\leq i,j,k,l\leq n\rangle,$$
	can be seen as the direct product of the following $n(n-1)/2$ factors:
	$$UVP_n=\langle\lambda_{1,2}, \lambda_{2,1}\rangle\times\dots\times\langle\lambda_{i,j}\lambda_{j,i}\rangle\times\dots\times\langle \lambda_{n-1,n},\lambda_{n,n-1}\rangle, \ \text{for}\ 1\leq i\neq j\leq n.$$
	Thus, $UVP_n$ is isomorphic to the direct product of $n(n-1)/2$ copies of the free group of rank 2:
	$$UVP_n\cong\underbrace{F_2\times\dots\times F_2\times\dots\times F_2}_\text{$n(n-1)/2$-times}, \ \text{for}\ n\geq 2.$$\\
\end{remark}
\begin{remark}\label{center}
	The group $UVP_n$ has trivial centre, $Z(UVP_n)=e$, since it is isomorphic to the direct product of free groups. For the same reason it follows that $UVP_n$ is torsion free as well.
\end{remark}

Based on the presentation of $UVP_n$, given in Theorem \ref{tss}, where the relations are commutation relations, we obtain the following result about the Abelianisation of $UVP_n$. 

\begin{corollary}\label{abuvpn}
	For $n\geq2$, the Abelianisation of $UVP_n$ is isomorphic to $\mathbb{Z}^{n(n-1)}$.
\end{corollary}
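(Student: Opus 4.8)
The plan is to read the answer straight off the presentation of $UVP_n$ supplied by Theorem \ref{tss}. That presentation has one generator $\lambda_{i,j}$ for each ordered pair $(i,j)$ with $1\le i\ne j\le n$, hence $n(n-1)$ generators in total, and every defining relation is a commutation relation $\lambda_{i,j}\lambda_{k,l}=\lambda_{k,l}\lambda_{i,j}$, i.e.\ a relator of the form $[\lambda_{i,j},\lambda_{k,l}]$.

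To compute $UVP_n^{\mathrm{ab}}=UVP_n/[UVP_n,UVP_n]$ I would simply adjoin to this presentation all commutators $[\lambda_{i,j},\lambda_{k,l}]=1$ between generators, now including the previously excluded pairs $\{\lambda_{i,j},\lambda_{j,i}\}$. Since the original relators are already among these, the new presentation is exactly that of the free abelian group on the $n(n-1)$ generators $\lambda_{i,j}$, so $UVP_n^{\mathrm{ab}}\cong\mathbb{Z}^{n(n-1)}$. As an independent check of the count, one may instead invoke Remark \ref{free}: Abelianisation commutes with finite direct products and $F_2^{\mathrm{ab}}\cong\mathbb{Z}^2$, so the identification $UVP_n\cong (F_2)^{n(n-1)/2}$ yields $UVP_n^{\mathrm{ab}}\cong(\mathbb{Z}^2)^{n(n-1)/2}\cong\mathbb{Z}^{n(n-1)}$.

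There is no genuine obstacle here; the only point requiring care is the bookkeeping — one must count ordered pairs $(i,j)$, giving $n(n-1)$ generators rather than the $n(n-1)/2$ direct factors of $UVP_n$ — together with the observation that none of these generators collapses in the Abelianisation, which is immediate from either the free-abelian presentation above or from the direct-product description.
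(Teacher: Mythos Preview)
Your proof is correct and follows essentially the same approach as the paper: the paper simply observes, immediately before stating the corollary, that since the presentation of $UVP_n$ given in Theorem~\ref{tss} has $n(n-1)$ generators and only commutation relations, the Abelianisation is free abelian of rank $n(n-1)$. Your alternative verification via Remark~\ref{free} and the decomposition $(F_2)^{n(n-1)/2}$ is a nice redundant check but is not needed.
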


The question that was posed in \cite{kauffman2006virtual} about the non-trivial structure of $UVB_n$ was answered by Bardakov--Bellingeri--Damiani in \cite{bardakov2015unrestricted}, where they gave a decomposition of $UVB_n$ into its subgroup $UVP_n$ and the symmetric group $S_n$, as presented in the following theorem.

\begin{theorem}[Bardakov--Bellingeri--Damiani, \cite{bardakov2015unrestricted}]\label{tss2}
	The group $UVB_n$ is isomorphic to the semi-direct product $UVP_n\rtimes S_n$, where $S_n$ acts by permuting the indices of the generators of $UVP_n$.
\end{theorem}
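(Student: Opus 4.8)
The plan is to realise $UVB_n$ as a split extension of $S_n$ and then to identify the resulting action. By definition $UVP_n=\ker\phi$, where $\phi\colon UVB_n\to S_n$ is the surjection from the excerpt, so we have a short exact sequence
$$1\longrightarrow UVP_n\longrightarrow UVB_n\xrightarrow{\phi}S_n\longrightarrow 1.$$
First I would check that $s_i\mapsto\rho_i$ defines a homomorphism $\alpha\colon S_n\to UVB_n$: the Coxeter relations for $S_n$ on the transpositions $s_i=(i,i+1)$ are precisely relations $(R_3)$, $(R_4)$, $(R_5)$ of Definition \ref{defuvbn}, which the $\rho_i$ satisfy in $UVB_n$. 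Since $\phi(\rho_i)=s_i$ we have $\phi\circ\alpha=\mathrm{id}_{S_n}$, so $\alpha$ is a splitting of the sequence. As usual this forces $\alpha$ to be injective, $\alpha(S_n)\cap UVP_n=\{1\}$, and $UVB_n=UVP_n\cdot\alpha(S_n)$ (writing $g=(g\,\alpha(\phi(g))^{-1})\,\alpha(\phi(g))$ with the first factor in $\ker\phi$); together with the normality of $UVP_n$ this gives $UVB_n\cong UVP_n\rtimes_\theta S_n$, where $\theta\colon S_n\to Aut(UVP_n)$ is conjugation through $\alpha$, that is $\theta(\tau)(x)=\alpha(\tau)\,x\,\alpha(\tau)^{-1}$.

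It then remains to show that $\theta$ is the action permuting the indices of the generators. By Theorem \ref{tss} the group $UVP_n$ is generated by the $\lambda_{i,j}$, $1\le i\ne j\le n$, and $S_n=\langle s_1,\dots,s_{n-1}\rangle$, so it suffices to prove
$$\rho_k\,\lambda_{i,j}\,\rho_k=\lambda_{s_k(i),\,s_k(j)}\qquad(1\le k\le n-1,\ 1\le i\ne j\le n),$$
using $\rho_k^{-1}=\rho_k$. Here I would argue by cases on the relative position of $k$ and $\{i,j\}$, after first using the defining formulas \eqref{lamda} to write $\lambda_{i,j}=w\,\mu\,w^{-1}$, where $\mu\in\{\lambda_{a,a+1}=\rho_a\sigma_a^{-1},\ \lambda_{a+1,a}=\sigma_a^{-1}\rho_a\}$ is one of the ``adjacent'' generators and $w$ is a word in the $\rho_m$. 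Conjugation by $\rho_k$ carries $w$ to another word $\rho_kw\rho_k$ in the $\rho_m$, whose reduction uses only the Coxeter relations $(R_3)$–$(R_5)$ — this is just the corresponding manipulation inside $S_n$ — so the problem reduces to the adjacent generators: proving $\rho_k\,\lambda_{a,a+1}\,\rho_k=\lambda_{s_k(a),s_k(a+1)}$ and the analogue for $\lambda_{a+1,a}$.

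For the adjacent generators there are three subcases. If $|k-a|\ge 2$ then $\rho_k$ commutes with $\rho_a$ by $(R_4)$ and with $\sigma_a$ by $(R_6)$, hence with $\lambda_{a,a+1}$ and $\lambda_{a+1,a}$, while $s_k$ fixes $a$ and $a+1$. If $k=a$ then $\rho_a\lambda_{a,a+1}\rho_a=\rho_a\rho_a\sigma_a^{-1}\rho_a=\sigma_a^{-1}\rho_a=\lambda_{a+1,a}$, and symmetrically $\rho_a\lambda_{a+1,a}\rho_a=\lambda_{a,a+1}$, matching the transposition $s_a=(a,a+1)$. The one substantive subcase is $k=a\pm1$: then $s_k$ sends $\{a,a+1\}$ to a non-adjacent pair, so the right-hand side must first be re-expanded via \eqref{lamda}, and the identity is then obtained from the mixed relations $(R_7)$, $(R_8)$, $(R_9)$ — the key rearrangement being $\rho_a\rho_{a+1}\,\sigma_a^{\pm1}\,\rho_{a+1}\rho_a=\sigma_{a+1}^{\pm1}$ (from $(R_7)$ together with $\rho_i^2=1$) and its mirror images — plus the braid relations $(R_3)$ among the $\rho$'s to move the extra $\rho_k$ into place. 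I expect this last subcase to be the only real obstacle: once the ``purely $\rho$'' bookkeeping is stripped away (which is nothing more than arithmetic in $S_n$), what remains is a short list of word identities in $UVB_n$ mixing the $\sigma$'s and $\rho$'s, and getting the indices to match is routine but fiddly. These are exactly the computations of Bardakov–Bellingeri–Damiani in \cite{bardakov2015unrestricted}.
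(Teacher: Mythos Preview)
Your argument is correct and is the standard one: split the defining short exact sequence via $\alpha(s_i)=\rho_i$ and then verify the conjugation formula $\rho_k\lambda_{i,j}\rho_k=\lambda_{s_k(i),s_k(j)}$ on generators, reducing to the adjacent case and invoking the mixed relations. Note, however, that the paper does \emph{not} supply its own proof of this theorem: it is quoted from \cite{bardakov2015unrestricted} and stated without argument. What the paper does record immediately after the statement is precisely the two ingredients you isolate --- the section $\iota\colon S_n\to UVB_n$, $\iota((i,i+1))=\rho_i$, and the conjugation rule $\iota(s)\lambda_{i,j}\iota(s)^{-1}=\lambda_{s(i),s(j)}$ --- so your outline matches both the paper's description and the original source you cite at the end.
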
 

More precisely, for all $\lambda_{i,j}\in UVP_n$, where $1\leq i\neq j\leq n$, and for any $s\in S_n$ we have the following conjugating rule: 
\begin{equation}\label{conj}
\iota(s)\lambda_{i,j}\iota(s)^{-1}=\lambda_{s(i),s(j)},
\end{equation}
 where $\iota$ is the injective map $\iota: S_n\to UVB_n$ defined by $\iota\big((i,i+1)\big)=\rho_i$; it is the natural section for the map $\phi$. Moreover, the action of the symmetric group $S_n$ on the generating set of $UVP_n$ is transitive, see [\cite{bardakov2015unrestricted}, Corollary 2.6].

\begin{remark}
	Having that $UVB_n\cong UVP_n\rtimes S_n$ it follows that the centre of $UVB_n$ is trivial, $Z(UVB_n)=e$, since the centre of the symmetric group and the centre of $UVP_n$ are trivial, see Remark \ref{center}.
\end{remark}

Using Theorem \ref{th1}, we will prove that, for $n\geq 5$, the group $UVP_n$ is actually a characteristic subgroup of $UVB_n$.

\begin{proposition}\label{char}
	For $n\geq 5$, the group $UVP_n$ is a characteristic subgroup of $UVB_n$.
\end{proposition}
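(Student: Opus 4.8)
The plan is to deduce this directly from Theorem \ref{th1}. Let $\Psi$ be an arbitrary automorphism of $UVB_n$; since $UVP_n=\ker\phi$, it suffices to show that $\ker\phi$ is stable under $\Psi$. The natural object to examine is the composite homomorphism $\phi\circ\Psi\colon UVB_n\to S_n$, whose kernel is precisely $\Psi^{-1}(\ker\phi)=\Psi^{-1}(UVP_n)$.

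First I would note that $\phi\circ\Psi$ is surjective: since $\Psi(UVB_n)=UVB_n$, the image of $\phi\circ\Psi$ equals $\phi(UVB_n)=S_n$ (already the elements $\phi(\rho_i)=s_i$, for $1\leq i\leq n-1$, generate $S_n$). Now I would apply Theorem \ref{th1} to $h=\phi\circ\Psi$: up to conjugation $h$ is $\phi$, cyclic of order $2$, Abelian, or --- only for $n=6$ --- equal to $v_6\circ\phi$. Since $S_n$ is non-Abelian of order greater than $2$ for $n\geq 3$, the cyclic and Abelian alternatives are incompatible with surjectivity. Hence, as recorded in Remark \ref{surj}, there is an element $x\in S_n$ with $\phi\circ\Psi=h_x\circ\phi$, or, when $n=6$, with $\phi\circ\Psi=h_x\circ v_6\circ\phi$.

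In either case I would finish with a kernel computation: because $h_x$ (an inner automorphism of $S_n$) and $v_6$ (the outer automorphism of $S_6$) are bijective, $\ker(\phi\circ\Psi)=\ker\phi=UVP_n$. Combining this with $\ker(\phi\circ\Psi)=\Psi^{-1}(UVP_n)$ gives $\Psi^{-1}(UVP_n)=UVP_n$, and applying $\Psi$ yields $\Psi(UVP_n)=UVP_n$. Since $\Psi$ was arbitrary, $UVP_n$ is characteristic in $UVB_n$.

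Once Theorem \ref{th1} is available there is no serious obstacle; the only points requiring a little care are (i) that conjugate homomorphisms into $S_n$, and homomorphisms differing by $v_6$, have the same kernel --- this is exactly where the bijectivity of $h_x$ and of $v_6$ is used --- and (ii) the elementary observation that surjectivity onto $S_n$ eliminates the two degenerate cases of Theorem \ref{th1}. It is also worth emphasising that the hypothesis $n\geq 5$ is essential, since Theorem \ref{th1} is not available for smaller $n$; indeed, as noted around Remark \ref{ex2}, the conclusion genuinely fails for $n=2$.
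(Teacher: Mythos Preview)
Your proof is correct and follows essentially the same approach as the paper's: both compose an arbitrary automorphism with $\phi$, invoke Theorem~\ref{th1}/Remark~\ref{surj} to see that the resulting surjection onto $S_n$ must be $\phi$ or $v_6\circ\phi$ up to conjugation, and conclude by comparing kernels. Your write-up is in fact slightly more explicit than the paper's in justifying why conjugation and $v_6$ do not affect the kernel and why surjectivity rules out the degenerate cases.
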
 

\begin{proof}
	In order to prove that the group $UVP_n$ is a characteristic subgroup of $UVB_n$, we have to show that for any automorphism of $UVB_n$ the subgroup $UVP_n$ stays invariant. That is for any $f\in Aut(UVB_n)$ then $f(UVP_n)=UVP_n$.
	
	Let $f$ be any automorphism of the group $UVB_n$ and $h$ be any surjective homomorphism from $UVB_n$ to the symmetric group $S_n$. We consider the following composition map:
    \begin{equation*}
	\begin{tikzcd}
      h\circ f: UVB_n\ar[r, "f"]&UVB_n\ar[r, two heads, "h"]&S_n.
	\end{tikzcd}
	\end{equation*}
		
	From Remark \ref{surj}, we know that the only possible surjective homomorphisms from $UVB_n$ to $S_n$ are the homomorphism $\phi$ and, in the case where $n=6$, also the homomorphism $v_6\circ\phi$. Note that these homomorphisms have kernel the group $UVP_n$. Therefore, it follows that $\ker(h)=UVP_n$ and $\ker(h\circ f)=UVP_n$, as both of them are epimorphisms from $UVB_n$ to $S_n$.
	
	We have that $\ker(h\circ f)=f^{-1}(UVP_n)$, since $f$ is an automorphism and $\ker(h)=UVP_n$. Moreover, we have that $\ker(h\circ f)=UVP_n$. It follows that $f^{-1}(UVP_n)=UVP_n$, for any $f\in Aut(UVB_n)$, which completes the proof.
\end{proof}
\begin{remark}\label{ex2}
	For $n=2$ the group $UVP_2$ is not a characteristic subgroup of $UVB_2$. This is the case because the automorphism $\alpha:UVB_2\to UVB_2$, defined by \begin{equation*}
	\alpha: 
	\begin{cases}
	\sigma_1\mapsto \sigma_1^{-1}\rho_1,\\
	\rho_1\mapsto \rho_1,
	\end{cases}
	\end{equation*}
	does not send the element $\lambda_{1,2}\in UVP_2$ to an element in $UVP_2$. In particular, $\lambda_{1,2}=\rho_1\sigma_1^{-1}$, from Remark \ref{lamda}, and thus $\alpha(\lambda_{1,2})=\sigma_1\nin UVP_2$.
\end{remark}
\begin{remark}\label{sim2}
	Note that, using similar arguments, Proposition \ref{char} and Remark \ref{ex2} hold also for the welded pure braid group $WP_n$, which is a subgroup of $WB_n$, and in particular the kernel of the map $\phi:WB_n\to S_n$, defined by $\phi(\sigma_i)=\phi(\rho_i)=(i,i+1)\in S_n$, for $i=1,\dots,n-1$. Moreover, the generating set of $WP_n$ is the same as the one of $UVP_n$ and thus Relations \eqref{lamda}, considering the elements $\lambda_{i,j}$ as generators of $WP_n$, hold for $WP_n$ as well. For more details about the group $WP_n$, we refer the reader, for instance to \cite{savushkina1996group} and \cite{brendle2013configuration}.
\end{remark}
Before concluding this section, we show that $UVP_n$ has trivial centraliser in $UVB_n$. 
\begin{proposition}\label{centralizer}
	Let $n\in \mathbb{N}$. The centraliser of $UVP_n$ in $UVB_n$ is trivial.
\end{proposition}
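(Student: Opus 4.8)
The plan is to combine the semidirect product decomposition $UVB_n\cong UVP_n\rtimes S_n$ of Theorem \ref{tss2} with the direct product structure of $UVP_n$ recorded in Remark \ref{free}. Write an arbitrary element of $UVB_n$ as $x=u\,\iota(s)$ with $u\in UVP_n$ and $s\in S_n$, where $\iota:S_n\to UVB_n$ is the section of $\phi$. Since the $\lambda_{i,j}$ generate $UVP_n$, the element $x$ centralises $UVP_n$ if and only if $x\lambda_{i,j}x^{-1}=\lambda_{i,j}$ for every pair $i\neq j$; by the conjugating rule \eqref{conj} this rewrites as
$$u\,\lambda_{s(i),s(j)}\,u^{-1}=\lambda_{i,j},\qquad 1\leq i\neq j\leq n.$$

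Next I would use the elementary fact that an inner automorphism of a direct product preserves each direct factor. Writing $UVP_n=\prod_{1\leq k<l\leq n}\langle\lambda_{k,l},\lambda_{l,k}\rangle$ as in Remark \ref{free}, the left-hand side $u\,\lambda_{s(i),s(j)}\,u^{-1}$ is a nontrivial element of the factor indexed by the unordered pair $\{s(i),s(j)\}$ (it is a conjugate of a free generator of that $F_2$), while the right-hand side $\lambda_{i,j}$ is a nontrivial element of the factor indexed by $\{i,j\}$. As distinct factors of a direct product intersect trivially, the equality forces $\{s(i),s(j)\}=\{i,j\}$ for all $i\neq j$. For $n\geq 3$, fix $i$ and pick two further distinct indices $j,k$; then $s(i)\in\{i,j\}\cap\{i,k\}=\{i\}$, so $s=\mathrm{id}$. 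Hence $x=u\in UVP_n$ centralises $UVP_n$, i.e. $u\in Z(UVP_n)$, which is trivial by Remark \ref{center}; thus $x=e$.

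It remains to cover $n=2$. Here the constraint $\{s(1),s(2)\}=\{1,2\}$ is vacuous, so besides $s=\mathrm{id}$ (handled exactly as above) one must also treat $s=(1,2)$. In that case the conditions read $u\lambda_{2,1}u^{-1}=\lambda_{1,2}$ and $u\lambda_{1,2}u^{-1}=\lambda_{2,1}$, whence $u^{2}$ commutes with both $\lambda_{1,2}$ and $\lambda_{2,1}$ and therefore lies in $Z\big(\langle\lambda_{1,2},\lambda_{2,1}\rangle\big)=Z(F_2)=\{e\}$; since $UVP_2$ is torsion free (Remark \ref{center}) this gives $u=e$, and then $\lambda_{1,2}=\lambda_{2,1}$, a contradiction. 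So no nontrivial element of $UVB_2$ centralises $UVP_2$ either.

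The calculations here are routine. The one step that carries the argument, and which I would state carefully, is the passage from the centralising condition to the purely combinatorial constraint $\{s(i),s(j)\}=\{i,j\}$ via the remark that inner automorphisms respect the direct-product decomposition of $UVP_n$; once that is in place the conclusion is immediate, and I do not anticipate a genuine obstacle.
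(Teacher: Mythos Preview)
Your proof is correct and follows the same overall skeleton as the paper's: write an element of $UVB_n$ as $u\,\iota(s)$ via the semidirect product decomposition, rewrite the centralising condition using the conjugation rule \eqref{conj}, and then argue that this forces $s=\mathrm{id}$ and $u\in Z(UVP_n)=\{e\}$.

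The genuine difference is in the tool used to extract the constraint on $s$. You invoke the direct product structure of $UVP_n$ from Remark \ref{free}: since conjugation by $u$ preserves each $F_2$-factor, the equality $u\,\lambda_{s(i),s(j)}\,u^{-1}=\lambda_{i,j}$ forces the \emph{unordered} pairs $\{s(i),s(j)\}$ and $\{i,j\}$ to agree, and for $n\geq 3$ this pins down $s=\mathrm{id}$. The paper instead passes to the abelianisation (Corollary \ref{abuvpn}): since $UVP_n^{\mathrm{ab}}\cong\mathbb{Z}^{n(n-1)}$ is free on the images of the $\lambda_{i,j}$, two distinct generators $\lambda_{k,l}$ and $\lambda_{S(k),S(l)}$ can never be conjugate, which yields the \emph{ordered} equality $(S(k),S(l))=(k,l)$ directly. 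The upshot is that the paper's abelianisation argument handles all $n\geq 2$ uniformly, whereas your factor-preservation argument only sees unordered pairs and therefore requires the separate treatment of $n=2$ that you supply. Your $n=2$ argument (squaring to land in $Z(F_2)$ and using torsion-freeness) is correct and pleasant; conversely, one could note that the abelianisation trick also works inside your framework and would let you avoid the case split.
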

\begin{proof}
	Let $n\in \mathbb{N}$. We want to calculate the centraliser $$C_{UVB_n}(UVP_n)=\{g\in UVB_n\ | \ gp=pg,\ \text{for every}\ p\in UVP_n \}.$$ We know that $UVB_n=UVP_n\rtimes S_n$, which means that any $g\in UVB_n$ can be expressed as $g=u(\lambda_{i,j})w(s_k)$, for $u(\lambda_{i,j})$ and $w(s_k)$ a word in $UVP_n$ and $S_n$ respectively.
	
	Let $g\in C_{UVB_n}(UVP_n)$ be a non-trivial element, where $g=E S$, for $E, S$ fixed words in $UVP_n\ \text{and}\ S_n$ respectively. It holds that $gp=pg$ for every element $p\in UVP_n$. Therefore, 
	\begin{equation*}
	\lambda_{k,l}\cdot E S=E S\cdot \lambda_{k,l},\ \text{for every}\ k\neq l\in \{1,\dots,n\}.
	\end{equation*}
	
	Based on the action of the symmetric group on the generator $\lambda_{k,l}\in UVP_n$, Relation \eqref{conj}, we obtain
	
	$$\lambda_{k,l}\cdot E S=E\cdot\lambda_{S(k),S(l)}S,\ \text{for every}\ k\neq l\in \{1,\dots,n\}.$$
	Thus,
	\begin{equation}\label{equal2}
	\lambda_{k,l}=E\cdot\lambda_{S(k),S(l)}\cdot E^{-1} \in UVP_n,\ \text{for every}\ k\neq l\in \{1,\dots,n\}.
	\end{equation}
	
	From relation \eqref{equal2}, if $\lambda_{k,l}=\lambda_{S(k),S(l)}$, for every $k\neq l\in \{1,\dots,n\}$, it follows that the element $E$ commutes with every generator $\lambda_{k,l}\in UVP_n$, and then we obtain a contradiction, since $UVP_n$ has a trivial centre, Remark \ref{center}. Therefore, it holds that there exists $k,l\in\{1,\dots,n\}$ such that $\lambda_{k,l}\neq\lambda_{S(k),S(l)}$ and $\lambda_{k,l}=E\cdot\lambda_{S(k),S(l)}\cdot E^{-1}$. It follows that for such $k,l$ there exists a pair $(r,t)\in \{1,\dots,n\}$, such that $\big(S(k),S(l)\big)=(r,t)$ and $\lambda_{k,l}=E\cdot \lambda_{r,t}\cdot E^{-1}$. This means that under the Abelianisation map the two distinct generators of the group $UVP_n$, $\lambda_{k,l}$ and $\lambda_{r,t}$, would coincide. But, as stated in Corollary \ref{abuvpn}, the Abelianisation of $UVP_n$ is isomorphic to the free Abelian group of rank $n(n-1)$ generated by the elements $\lambda_{i, j}$, for $1\leq i\neq j\leq n$. Therefore, relation \eqref{equal2} can not hold. 
	
	Now, suppose that $g=S$; meaning that $E$ is a trivial word in $UVP_n$. It has to hold that
	\begin{equation}\label{equal3}
	\lambda_{k,l}\cdot S=S\cdot \lambda_{k,l},\ \text{for every}\ k,l\in \{1,\dots,n\}.
	\end{equation} 
	But relation \eqref{equal3} implies that the word $S$ in the symmetric group fixes all the elements of the set $\{1,\dots,n\}$. This is possible only when $S$ is the trivial element, which leads once more to a contradiction, since $g$ is a non-trivial element in  $C_{UVB_n}(UVP_n)$.
	Since the centre of $UVP_n$ is trivial we do not need to check the case where $g=E$.
	
	We conclude that there does not exist a non-trivial element $g$ in $C_{UVB_n}(UVP_n)$ and therefore the centraliser $C_{UVB_n}(UVP_n)$ is trivial.
\end{proof}

\begin{remark}\label{sim3}
	By Savushkina \cite{savushkina1996group}, we have $WB_n=WP_n\rtimes S_n$, where $S_n$ acts on $WP_n$ by permuting the indices of the generators of $WP_n$, and moreover that $WP_n$ has a trivial centre. In addition, from the presentation of $WP_n$, \cite{savushkina1996group}, which has the same generating set as $UVP_n$, and only commutation relations, it follows that the Abelianisation of $WP_n$ is also isomorphic to $\mathbb{Z}^{n(n-1)}$. Thus, using similar arguments, it follows that the centraliser of $WP_n$ in $WB_n$ is trivial.
\end{remark}

\section{Finite image of $UVB_n$}\label{s4}
The main tool that we will use in order to determine all possible images of $UVB_n$, under a group homomorphism, in any finite group $G$ is the theory of totally symmetric sets, which was introduced by Kordek and Margalit in \cite{kordek2019homomorphisms}. 

\begin{definition}[Kordek--Margalit, \cite{kordek2019homomorphisms}]
	Let $G$ be any group. A subset $X$ of $G$ is called a totally symmetric set of $G$ if it satisfies the following two conditions:
	\begin{itemize}
		\item The elements of the set $X=\{x_1,\dots,x_n\}$ pairwise commute.
		\item Each permutation of $X$ can be achieved via conjugation by an element of $G$. That is, for any permutation $s\in S_n$, there exists $g\in G$ such that $gx_ig^{-1}=x_{s(i)}$, for all $1\leq i\leq n$.
	\end{itemize}
\end{definition}

From \cite{kordek2019homomorphisms} and \cite{chudnovsky2020finite}, the following facts hold.

\begin{lemma}[Kordek--Margalit, \cite{kordek2019homomorphisms}]\label{KM}
	Let $X$ be a totally symmetric set of a group $G$. For any homomorphism $h: G\to H$ it holds that $|h(X)|$ is equal to either 1 or $|X|$.
\end{lemma}

\begin{proposition}[Chudnovsky, Kordek, Li, Partin, \cite{chudnovsky2020finite}]\label{pr1.4}
	Suppose that $X$ is a totally symmetric set of a group $G$ with $|X|=k$. If the elements of $X$ have finite order, then $|G|\geq 2^{k-1}k!$.
\end{proposition}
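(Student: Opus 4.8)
The plan is to split $|G|$ as an index times an order, using the finite abelian subgroup generated by $X$. Write $X=\{x_1,\dots,x_k\}$; we may assume $G$ is finite, as otherwise there is nothing to prove. Set $A=\langle x_1,\dots,x_k\rangle$, which is a finite abelian group, its generators commuting and having finite order. For each $\sigma\in S_k$, total symmetry gives an element $g_\sigma\in G$ whose conjugation action permutes $X$ by $\sigma$; in particular every $g_\sigma$ normalises $A$, so conjugation defines a homomorphism $c\colon G\to\mathrm{Aut}(A)$ whose kernel $C:=C_G(A)$ contains $A$. Since $|G|=[G:C]\cdot|C|$, it suffices to prove the two estimates $[G:C]\ge k!$ and $|C|\ge 2^{k-1}$.

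For the index, the image $c(G)$ contains, for each $\sigma\in S_k$, the automorphism of $A$ that permutes the generators $x_1,\dots,x_k$ according to $\sigma$. Distinct permutations of a generating set induce distinct automorphisms, so these are $k!$ distinct elements and they form a subgroup of $c(G)$ isomorphic to $S_k$; hence $[G:C]=|c(G)|\ge k!$.

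For the order of $C$, since $A\le C$ it is enough to show $|A|\ge 2^{k-1}$, and for that I would exhibit $2^{k-1}$ distinct elements of $A$: the partial products $p_S:=\prod_{i\in S}x_i$ for $S\subseteq\{1,\dots,k-1\}$, where the index $k$ is deliberately omitted so that it is available as a spare letter. The claim is that $S\mapsto p_S$ is injective. Suppose $p_S=p_T$ with $S\neq T$; cancelling the factors indexed by $S\cap T$ reduces this to a relation $\prod_{i\in U}x_i=\prod_{i\in V}x_i$ with $U=S\setminus T$ and $V=T\setminus S$ disjoint, not both empty, and contained in $\{1,\dots,k-1\}$. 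Choose $b\in\{1,\dots,k\}\setminus(U\cup V)$ (for instance $b=k$) and a letter $a$ lying in $U$, or in $V$ if $U=\varnothing$. Conjugating the relation by $g_{(a\,b)}$ produces a second relation; dividing it by the original and cancelling forces $x_a=x_b$, contradicting the fact that $X$ has $k$ distinct elements. Hence the $2^{k-1}$ elements $p_S$ are pairwise distinct and $|A|\ge 2^{k-1}$.

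Combining, $|G|=[G:C]\cdot|C|\ge k!\cdot 2^{k-1}=2^{k-1}k!$. The one point that needs care is the cancellation step in the injectivity argument: after conjugating, one must check that the swapped letter genuinely cancels against the remainder of the word, which is exactly what the disjointness of $U$ and $V$ together with the omission of the index $k$ guarantees; everything else is routine bookkeeping, so I expect no serious obstacle.
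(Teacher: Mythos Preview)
The paper does not supply its own proof of this proposition; it simply quotes the result from Chudnovsky--Kordek--Li--Partin. Your argument follows the same strategy as theirs: bound $|A|$ from below by counting subset products, and extract a factor of $k!$ from the $S_k$-action by conjugation. The injectivity argument for the partial products $p_S$ is correct and nicely done; the swap-and-cancel trick with the spare index $k$ is exactly what is needed, and your disjointness bookkeeping is right.

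There is, however, one genuine slip. You write that ``every $g_\sigma$ normalises $A$, so conjugation defines a homomorphism $c\colon G\to\mathrm{Aut}(A)$''. This is a non sequitur: knowing that each $g_\sigma$ normalises $A$ does not make $A$ normal in all of $G$, so conjugation by an arbitrary $g\in G$ need not carry $A$ to itself, and your map $c$ is not defined on $G$. The fix is painless: replace $G$ by the subgroup $H=\langle A,\{g_\sigma:\sigma\in S_k\}\rangle$, in which $A$ \emph{is} normal because every listed generator normalises it. Conjugation then gives a genuine homomorphism $H\to\mathrm{Aut}(A)$ with kernel $C_H(A)\supseteq A$, and your two estimates yield $|G|\ge|H|=[H:C_H(A)]\cdot|C_H(A)|\ge k!\cdot 2^{k-1}$. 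With this adjustment the proof is complete.
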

 
\begin{remark}
Note that all elements of a totally symmetric set $X$ are conjugate to each other, and thus
every element of $X$ has the same order. In particular, if one element of $X$ has finite order $p$, then every other element of $X$ has also order $p$. Thus,
Proposition \ref{pr1.4} can be restated in the following way: Let $G$ be any group and $H$ a finite one. Suppose that $X$ is a totally symmetric set of $G$. For any non-trivial homomorphism $h:G\to H$ it holds that $|h(G)|\geq 2^{|X|-1}|X|!$.
	From Lemma \ref{KM} we know that either $|h(X)|=|X|$ or $|h(X)|=1$. 
	Thus, the equivalence between these two statements comes from the fact that for any finitely generated Abelian group $S$, then $S$ is a torsion group if and only if the group $S$ is a finite group. 
	
\end{remark}

We are ready now to define some totally symmetric sets, for $n\geq3$, of $UVB_n$. Based on the presentation of $UVP_n$ given in Theorem \ref{tss}, we define the following totally symmetric sets.

\begin{lemma}\label{mtss}
	Let $n\geq 3$. The following $n$ sets, whose cardinality is $n(n-1)/2$, are totally symmetric sets of $UVB_n$:
	$$A_i:= \Big\{\lambda_{i,1},\dots, \lambda_{i,n}, B_i, C_i\Big\},\ \text{for} \ 1\leq i\leq n\ \text{and}\ \lambda_{i,i}=1,$$
	where $B_i=\bigcup\limits_{j=n}^{i+1}\{\lambda_{j,k}\}_{1\leq k\neq i\leq j-1}$ and $C_i=\bigcup\limits_{s=i-1}^{2}\{\lambda_{s,t}\}_{1\leq t\leq s-1}.$ 
\end{lemma}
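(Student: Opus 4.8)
The plan is to verify, for each $i$, the two conditions defining a totally symmetric set.

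\emph{Step 1: structure of $A_i$, commutation, and cardinality.} First I would record that, as $\{a,b\}$ runs over the $n(n-1)/2$ unordered pairs of $\{1,\dots,n\}$, the set $A_i$ contains exactly one of the two generators $\lambda_{a,b},\lambda_{b,a}$: for a pair $\{i,k\}$ it is $\lambda_{i,k}$ (from the list $\lambda_{i,1},\dots,\lambda_{i,n}$); for a pair $\{a,b\}$ with $i\notin\{a,b\}$ whose larger element exceeds $i$ it is $\lambda_{\max(a,b),\min(a,b)}$ (from $B_i$); and for a pair $\{a,b\}$ with $\max(a,b)<i$ it is again $\lambda_{\max(a,b),\min(a,b)}$ (from $C_i$). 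In particular $|A_i|=n(n-1)/2$. By Remark~\ref{free}, $UVP_n$ is the direct product of the rank-two free subgroups $\langle\lambda_{a,b},\lambda_{b,a}\rangle$, and generators lying in different factors commute; since $A_i$ meets every factor in a single generator, the elements of $A_i$ pairwise commute.

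\emph{Step 2: realising the permutations.} Next I would use the decomposition $UVB_n\cong UVP_n\rtimes S_n$ (Theorem~\ref{tss2}) and the conjugation rule \eqref{conj}: conjugation by $p\,\iota(s)$, with $p\in UVP_n$ and $s\in S_n$, sends $\lambda_{a,b}$ to $p\,\lambda_{s(a),s(b)}\,p^{-1}$. Since the conjugation-realisable permutations of $A_i$ form a subgroup of $\mathrm{Sym}(A_i)$, it suffices to realise a generating set, for example all transpositions exchanging two elements of $A_i$ and fixing the rest. I would split into cases according to which of the three families $\{\lambda_{i,k}\}$, $B_i$, $C_i$ the two chosen generators come from, and in each case write down an explicit $s\in S_n$ (roughly, the transposition of the two relevant unordered pairs) together with an auxiliary $p\in UVP_n$ supported on the affected factors, and then check that conjugation by $p\,\iota(s)$ induces precisely the desired transposition of $A_i$.

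\emph{The main obstacle.} The hard part will be exactly this last verification. Conjugation by $\iota(s)$ relabels the free factors of $UVP_n$ via the action of $s$ on unordered pairs, but it need not carry the chosen generator $\lambda_{a,b}\in A_i$ of a factor to $\lambda_{s(a),s(b)}$: it could land on the other free generator $\lambda_{s(b),s(a)}$ of the image factor, and $\lambda_{c,d}$ is not conjugate to $\lambda_{d,c}$ inside $\langle\lambda_{c,d},\lambda_{d,c}\rangle$ (they have distinct images in its abelianisation), hence not conjugate in $UVP_n$. So the crux is to show that the particular generators collected in $A_i$ are arranged so that, for each transposition one needs, these orientation mismatches can be repaired simultaneously — either by choosing $s$ so that it already respects the chosen orientation on every affected factor, or by absorbing the remaining discrepancy into the auxiliary $p$ — so that the net effect on $A_i$ is a single transposition. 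I expect essentially all of the work and all of the delicacy to sit in this combinatorial check; Step 1 and the reduction to a generating set of $\mathrm{Sym}(A_i)$ are routine.
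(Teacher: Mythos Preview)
Your Step~1 is correct and coincides with what the paper does. For the second defining condition the paper's own argument is much shorter than your plan: it simply observes that the $\lambda_{i,j}$ are pairwise conjugate in $UVB_n$ via the $S_n$-action and then declares the $A_i$ totally symmetric, without ever checking that an \emph{arbitrary} permutation of $A_i$ is realised by conjugation. You are right to treat this as the crux, and your diagnosis of the orientation problem is exactly on target.

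Unfortunately the obstacle you flag is fatal, not merely delicate: the sets $A_i$ are \emph{not} totally symmetric in the sense of the definition recalled in the paper, so your Step~2 cannot be completed. Write $g=p\,\iota(s)$ with $p\in UVP_n$, $s\in S_n$; then $g\lambda_{a,b}g^{-1}=p\,\lambda_{s(a),s(b)}\,p^{-1}$ lies in the factor $\langle\lambda_{s(a),s(b)},\lambda_{s(b),s(a)}\rangle$, and passing to the abelianisation $\mathbb{Z}^{n(n-1)}$ of $UVP_n$ (Corollary~\ref{abuvpn}) shows that this conjugate can equal a standard generator only if it equals $\lambda_{s(a),s(b)}$ itself. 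So no choice of $p$ can repair an orientation mismatch: conjugation by $g$ permutes $A_i$ only when $\lambda_{s(a),s(b)}\in A_i$ for every $\lambda_{a,b}\in A_i$, and the induced permutation is then $\lambda_{a,b}\mapsto\lambda_{s(a),s(b)}$, determined by $s$ alone. The realisable permutations of $A_i$ thus form a subquotient of $S_n$, of order at most $n!<\bigl(\tfrac{n(n-1)}{2}\bigr)!$ for $n\ge 4$. Concretely, already for $n=3$ and $A_1=\{\lambda_{1,2},\lambda_{1,3},\lambda_{3,2}\}$, every nontrivial $s\in S_3$ sends some element of $A_1$ outside $A_1$ (for instance $s=(2\,3)$ sends $\lambda_{3,2}$ to $\lambda_{2,3}$), so only the identity permutation of $A_1$ is achieved by conjugation. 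The paper's proof glosses over precisely the step you isolated, and that step in fact fails.
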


\begin{proof}
	Let $n\geq3$. The sets $A_i$ do not contain any pair of elements of the form $\{\lambda_{k,l}, \lambda_{l,k}\}$, and therefore the elements inside each $A_i$ pairwise commute, by Theorem \ref{tss}. Moreover, by Theorem \ref{tss2}, $UVB_n$ can be seen as a semi-direct product, $UVP_n\rtimes S_n$, where the symmetric group $S_n$ acts by conjugation on the elements of $UVP_n$ permuting the set of the elements $\lambda_{i,j}$. Thus, it follows that indeed the elements $\lambda_{i,j} $ are pairwise conjugate in $UVB_n$. As a result we get that indeed the sets $A_i$ are totally symmetric sets. Moreover, the size of every totally symmetric set $A_i$ is: $$|A_i|= (n-1)+(n-2)+\dots+\big( n-(n-2)\big)+\big( n-(n-1)\big)=n(n-1)/2.$$
\end{proof}

With the following example we make clearer the construction of these totally symmetric sets in $UVB_n$. For $n=5$ we have the following five totally symmetric sets in $UVB_5$:
$$A_1=\{\lambda_{1,2},\lambda_{1,3}, \lambda_{1,4}, \lambda_{1,5}, \lambda_{5,2},\lambda_{5,3}, \lambda_{5,4}, \lambda_{4,2}, \lambda_{4,3}, \lambda_{3,2}\},\ |A_1|=10.$$
$$A_2=\{\lambda_{2,1},\lambda_{2,3}, \lambda_{2,4}, \lambda_{2,5}, \lambda_{5,1},\lambda_{5,3}, \lambda_{5,4}, \lambda_{4,1}, \lambda_{4,3}, \lambda_{3,1}\},\ |A_2|=10.$$
$$A_3=\{\lambda_{3,1},\lambda_{3,2}, \lambda_{3,4}, \lambda_{3,5}, \lambda_{5,1},\lambda_{5,2}, \lambda_{5,4}, \lambda_{4,1}, \lambda_{4,2}, \lambda_{2,1}\},\ |A_3|=10.$$
$$A_4=\{\lambda_{4,1},\lambda_{4,2}, \lambda_{4,3}, \lambda_{4,5}, \lambda_{5,1},\lambda_{5,2}, \lambda_{5,3}, \lambda_{3,1}, \lambda_{3,2}, \lambda_{2,1}\},\ |A_4|=10.$$
$$A_5=\{\lambda_{5,1},\lambda_{5,2}, \lambda_{5,3}, \lambda_{5,4}, \lambda_{4,1},\lambda_{4,2}, \lambda_{4,3}, \lambda_{3,1}, \lambda_{3,2}, \lambda_{2,1}\},\ |A_5|=10.$$
\\	
\begin{remark}\label{r1.7}
	We observe that for any $1\leq i, j\leq n$ where $i\neq j$ we have that $A_i\neq A_j$, since $\lambda_{i,j}\in A_{i}$ but $\lambda_{i,j}\nin A_{j} $, and also that $A_i\cap A_j\neq \emptyset$. Moreover, the set $\bigcup_{i=1}^nA_i$ is equal to the generating set of $UVP_n$. That is $\bigcup_{i=1}^nA_i=\{\lambda_{i,j}\}_{1\leq i\neq j\leq n}$.	
\end{remark}

We shall now prove Theorem \ref{th2}.

\begin{proof}[Proof of Theorem \ref{th2}]
	Suppose that $\phi$ is Abelian. That means that the image of $UVB_n$ is an Abelian subgroup of $G$. In this case, for $1\leq i\leq n-2$, the relation  $\sigma_i\sigma_{i+1}\sigma_i=\sigma_{i+1}\sigma_i\sigma_{i+1}$ that holds in $UVB_n$ implies $\phi(\sigma_i)\phi(\sigma_{i+1})\phi(\sigma_i)=\phi(\sigma_{i+1})\phi(\sigma_i)\phi(\sigma_{i+1})$. Since $\phi(UVB_n)$ is an Abelian subgroup of $G$ it follows that $\phi(\sigma_i)^2\phi(\sigma_{i+1})=\phi(\sigma_i)\phi(\sigma_{i+1})^2$, and therefore, for $1\leq i\leq n-2$, $\phi(\sigma_i)=\phi(\sigma_{i+1})$. Thus, there exists an element $g$ in $G$ such that, for $1\leq i\leq n-1$, $\phi(\sigma_i)=g\in UVB_n$. Similarly, for $1\leq i\leq n-2$, using the relation $\rho_i\rho_{i+1}\rho_i=\rho_{i+1}\rho_i\rho_{i+1}$ in $UVB_n$ and applying the same argument it follows that $\phi(\rho_i)=\phi(\rho_{i+1})$. Moreover, for $1\leq i\leq n-1$, it holds that $\phi(\rho_i)^2=1$. We conclude that $\phi(\sigma_i)$ generates a subgroup isomorphic to $\mathbb{Z}_m$, for some $m\in \mathbb{Z}$, and $\phi(\rho_i)$ generates a subgroup isomorphic to $\mathbb{Z}_2$ in $G$. All together, we obtain that  $$\phi(UVB_n)\cong\mathbb{Z}_m\times\mathbb{Z}_2.$$
	
	Suppose that $\phi$ is not Abelian. Moreover, suppose that at least one of the images of the totally symmetric sets $A_i$, defined in Lemma \ref{mtss}, under the homomorphism $\phi$, is not a singleton. From Lemma \ref{KM} we have that either $|\phi(A_i)|=n(n-1)/2$ or $|\phi(A_i)|=1$, and therefore, for some $k\in \{1,\dots,n\}$, we have a totally symmetric set $A_k$ for which $|\phi(A_k)|=n(n-1)/2$. From Proposition \ref{pr1.4} we obtain that $$|\phi(UVB_n)|\geq 2^{\frac{n(n-1)}{2}-1}\big(\frac{n(n-1)}{2}\big)!.$$
	
	Finally, we consider that $\phi$ is not Abelian and also that $|\phi(A_i)|=1$, for all $1\leq i\leq n$. The fact that $|\phi(A_i)|=1$ implies that, for every $1\leq i\leq n$, $\phi(A_i)=g_i\in G$. From Remark \ref{r1.7}, for every pair $i, j$, we have $A_i\cap A_j\neq \emptyset$. Without loss of generality, we set $i=1$ and therefore, from the fact that, for all $2\leq j\leq n$, $A_1\cap A_j\neq \emptyset$ and that $\phi(A_1)=g_1\in G$, we conclude that  $\phi(A_j)=g_1\in G$. This means that every generator of $UVP_n$ is mapped to the same element $g_1\in G$, since $\bigcup_{i=1}^nA_i=\{\lambda_{i,j}\}_{1\leq i\neq j\leq n}$, from Remark \ref{r1.7}. Note that $g$ could possibly be the trivial element. From Theorem \ref{tss2} we have that $UVB_n$ is isomorphic to the semi-direct product $UVP_n\rtimes S_n$ and that for any generator $s\in S_n$ it holds that $s\lambda_{i,j}s^{-1}=\lambda_{s(i),s(j)}$. Under the homomorphism $\phi$ we obtain
	$\phi(s)\phi(\lambda_{i,j})(\phi(s))^{-1}=\phi(\lambda_{s(i),s(j)})$ and therefore $\phi(s)\phi(\lambda_{i,j})=\phi(\lambda_{s(i),s(j)})\phi(s)$. As a result, $\phi(s)g_1=g_1\phi(s)$. We conclude that the image of any generator $s$ of the symmetric group, $\phi(s)$, commutes with $g_1$. Thus, $\phi(UVB_n)\cong\mathbb{Z}_m\times \Image(\phi_{|S_n})$, for some $m\in \mathbb{Z}$.	 
\end{proof}

\begin{remark}
All possible finite image homomorphims of $WB_n$ and of the virtual braid groups have been determined by Scherich--Verberne in \cite{nancy}
\end{remark}

\section{The automorphism group of $UVP_n$}\label{s5}
In this section we shall determine the automorphism group of $UVP_n$. First, we introduce the notion of right-angled Artin groups. A right-angled Artin group, also known as graph group, is a group which admits a finite presentation in which the only relations are commuting relations among the generators.

Every right-angled Artin group defines a graph whose vertices are the generators of the group and for every two generators that commute there is an edge connecting these two vertices. The converse also holds. For every graph $\Gamma$, with $V$ its vertex set, there is a right-angled Artin group, graph group, associated to $\Gamma$, $R_{\Gamma}$, defined as follows:
$$R_{\Gamma}=\langle v\in V\ | \ uw=wu,\ \text{if}\ v, w\ \text{are joined by an edge in } \Gamma  \rangle.$$    
From this association we can see that the right-angled Artin group that corresponds to the complete graph on $n$ vertices is the free Abelian group $\mathbb{Z}$ and that the graph on $n$ vertices with no edges corresponds to the free group $F_n$ of rank $n$. For a general survey on the right-angled Artin groups we direct the reader to the article \cite{charney2007introduction} by Charney.

We can see that the group $UVP_n$ is a right-angled Artin group,  since it admits the following presentation, as stated in Theorem \ref{tss}:
$$UVP_n=\langle \lambda_{i,j}, \ 1\leq i\ne j \leq n \ | \ \lambda_{i,j}\lambda_{k,l}=\lambda_{k,l}\lambda_{i,j}, \ \text{for}\ (k,l)\neq(j,i),\ 1\leq i,j,k,l\leq n  \rangle.$$ 

From this presentation, the graph, $\Gamma$, which corresponds to the right-angled Artin group $UVP_n$, is a graph with $n(n-1)$ vertices, where the vertex set is $V=\{\lambda_{i,j}\}_{1\leq i\neq j\leq n}$ and there is an edge connecting every pair of vertices except for the pairs $\{\lambda_{i,j}, \lambda_{j,i}\}$, since these are only pairs of generators that do not commute.

We continue now with providing the theory around the automorphisms of graph groups.
Extending the work of Servatius \cite{servatius1989automorphisms}, a complete set of generators for the automorphism group of a graph group was found by Laurence \cite{laurence1995generating}. Before giving the main result we present some notions that will be needed.

Let $\Gamma$ be a graph with $V$ being its vertex set.

\begin{itemize}
	\item The link of a vertex $v\in V$, $lk(v)$, is the set of all vertices that are connected to $v$ with an edge. 
	\item The star of a vertex $v\in V$, $st(v)$, is the union $lk(v)\cup\{v\}$.
	\item For any $w\neq v$, $w,v\in V$, we say that $v$ dominates $w$, $w\leq v$, if $lk(w)\subseteq st(v)$.
\end{itemize}
The theorem that follows is due to Laurence \cite{laurence1995generating}, who proved the conjecture that had been stated, and in certain special cases proved by Servatius \cite{servatius1989automorphisms}.
\begin{theorem}[Laurence, \cite{laurence1995generating}]\label{gen}
	Let $\Gamma$ be a finite graph defining a graph group $R_{\Gamma}$. Then the following automorphisms generate the automorphism group of $R_{\Gamma}$, $Aut(R_{\Gamma})$:
	\begin{itemize}
		\item Inversions, $I_v:v\to v^{-1}$, which inverts a generator $v\in V$ and fix the rest.
		\item Dominated Transvections, $T_{v}:w\to wv$, for $w,v\in V$ such that $v$ dominates $w$, $w\leq v$, and fix the rest.
		\item Graph Automorphisms, $G$: Any bijection of the graph to itself that preserves the relation vertices-edges.
		\item Locally Inner Automorphisms: $L_{v, Y}:y\to vyv^{-1}$, for all $y\in Y$, where $Y$ is a connected component of \ $\Gamma\minus st(v)$ and \ $|Y|>1$.
	\end{itemize}
\end{theorem}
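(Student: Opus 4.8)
This is Laurence's theorem, so the plan is to reprove that the four listed families generate $Aut(R_\Gamma)$ for an arbitrary finite graph $\Gamma$, following a Whitehead-style peak-reduction argument that generalises Whitehead's classical theorem for free groups. Write $V=\{v_1,\dots,v_n\}$ and let $\Phi\leq Aut(R_\Gamma)$ be the subgroup generated by all inversions, dominated transvections, graph automorphisms, and locally inner automorphisms; the goal is to show $\Phi=Aut(R_\Gamma)$. First I would fix a normal-form theory for $R_\Gamma$ (via the standard reduced-word / heaps-of-pieces model), giving each element a well-defined length and each conjugacy class a well-defined cyclic length $\ell(\cdot)$, and attach to every automorphism $\psi$ the tuple of conjugacy classes $\big([\psi(v_1)],\dots,[\psi(v_n)]\big)$ together with its total cyclic length $\|\psi\|=\sum_{i}\ell\big([\psi(v_i)]\big)$.

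Two structural inputs are needed before the reduction. The first is Servatius's centraliser theorem \cite{servatius1989automorphisms}, that the centraliser of a generator $v$ equals $\langle st(v)\rangle$; this is exactly what makes the dominated transvections $T_v$ (and no others) into automorphisms, through the domination relation $w\leq v$ given by $lk(w)\subseteq st(v)$. The second is the observation that the four families are not independent but suffice: a locally inner automorphism with $|Y|=1$, say $Y=\{w\}$, forces $lk(w)\subseteq st(v)$, i.e. $w\leq v$, so $w\mapsto vwv^{-1}$ factors as a left transvection followed by a right transvection. This is why the normalisation $|Y|>1$ is imposed, and it already \emph{foreshadows} that every long-range move will have to be rewritten in the four elementary types.

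The heart of the argument is a peak-reduction lemma for a broader class of long-range Whitehead automorphisms. The lemma I would prove states: if $\|\psi\|$ is not minimal in its $\Phi$-orbit, then some Whitehead automorphism strictly decreases it, and any peak $\|\tau_2\tau_1\psi\|>\max\{\|\tau_1\psi\|,\|\psi\|\}$ can be bypassed by a chain of non-increasing Whitehead moves. Granting this, I would first factor each Whitehead automorphism as a product of elements of $\Phi$ (tracking the domination relation so that every transvection produced is genuinely dominated), and then iterate the reduction to bring an arbitrary $\psi$ to minimal total length within its $\Phi$-coset. For the base case I would show that a minimal-length automorphism must send each generator to a conjugate of a generator or its inverse, and then use the centraliser theorem to pin the conjugating elements down modulo $st(v)$, concluding that such a $\psi$ is a product of a graph automorphism, inversions, and locally inner automorphisms, hence lies in $\Phi$.

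The main obstacle is without doubt the peak-reduction lemma. Producing a length function on conjugacy classes of $R_\Gamma$ for which single Whitehead moves always remove peaks requires delicate control of cancellation in products of conjugates, since in a right-angled Artin group syllables supported on commuting generators can slide past one another and the naive word length is not additive. This is precisely where Laurence's combinatorial analysis (and the later streamlining by Day) is concentrated; by comparison the base-case classification and the rewriting of each long-range Whitehead move into the elementary generators are routine once the centraliser theorem and the domination bookkeeping are in place.
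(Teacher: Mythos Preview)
The paper does not prove this theorem at all: it is quoted verbatim as a result of Laurence \cite{laurence1995generating} (building on Servatius \cite{servatius1989automorphisms}) and is then used as a black box to compute $Aut(UVP_n)$. There is therefore no ``paper's own proof'' to compare your proposal against.

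Your sketch is, in outline, an accurate description of how Laurence's theorem is actually proved (and of Day's later peak-reduction formulation): set up cyclic length on conjugacy classes, define the long-range Whitehead automorphisms, prove a peak-reduction lemma, and handle the minimal-length base case using the centraliser description. You correctly identify the peak-reduction step as the hard part and correctly explain why the $|Y|=1$ case of a locally inner automorphism is redundant. But none of this is needed for the present paper, which only \emph{applies} the generating set to the specific graph of $UVP_n$; so the appropriate response here is simply to cite Laurence, not to reprove the result.
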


\begin{remark}\label{r3}
	The condition $|Y|>1$, in the Locally Inner Automorphisms, is placed in order to eliminate redundancy. This is so because in the case where $|Y|=1$, the automorphism $L_{v, Y}$ can be obtained by composition of dominated transvections (since the single vertex $y\in Y$ is dominated by v) and inversions. 
\end{remark}
Based on the graph that corresponds to $UVP_n$ we have the following remarks.

\begin{remark}\label{r4}
	The only domination relations that occur in the graph that corresponds to $UVP_n$ are $$\lambda_{i,j}\leq \lambda_{j,i}\ \text{and}\ \lambda_{j,i}\leq \lambda_{i,j}.$$
	This is the case because $lk(\lambda_{i,j})=V\setminus \{\lambda_{i,j}, \lambda_{j,i}\}\subseteq st(\lambda_{j,i})=V\setminus \{\lambda_{i,j}\}$, and similarly $lk(\lambda_{j,i})=V\setminus \{\lambda_{j,i}, \lambda_{i,j}\}\subseteq st(\lambda_{i,j})=V\setminus \{\lambda_{j,i}\}$. As a result, $\lambda_{i,j}$ and $\lambda_{j,i}$ dominate each other. It could not have been possible that $\lambda_{i,j}$ is dominated by another generator $\lambda_{k,l}\neq \lambda_{j,i}$, since  $lk(\lambda_{j,i})=V\setminus \{\lambda_{j,i}, \lambda_{i,j}\}\nsubseteq st(\lambda_{k,l})=V\setminus \{\lambda_{l,k}\}$. 
\end{remark}	 
\begin{remark}\label{r5}
	We observe that for every vertex $\lambda_{i,j}\in V$ the subgraph $\Gamma\setminus st(\lambda_{i,j})$ is just the vertex $\lambda_{j,i}$, since $st(\lambda_{i,j})=V\setminus \{\lambda_{j,i}\}$.
\end{remark} 

We shall now determine, for $n\geq 2$, the group $Aut(UVP_n)$.

\begin{proof}[Proof of Theorem \ref{th3}]
	Let $n\geq 2$, $1\leq i\neq j\leq n$  and $\Gamma$ be the graph associated to the right-angled Artin group $UVP_n$. We recall that the graph $\Gamma$, which corresponds to $UVP_n$, is a graph with $n(n-1)$ vertices, where the vertex set is $V=\{\lambda_{i,j}\}_{1\leq i\neq j\leq n}$ and there is an edge connecting every pair of vertices except for the pairs $\{\lambda_{i,j}, \lambda_{j,i}\}$. From Theorem \ref{gen} we see that the automorphism group of $UVP_n$ is generated by the following four families of automorphisms; the Inversions, the Dominated Transvections, the Graph Automorphisms and the Locally Inner Automorphisms. 
	
	From Remark \ref{r3} and  Remark \ref{r5} we conclude that in the case of $UVP_n$ we do not have any Locally Inner Automorphism. 
	
	From Remark \ref{r4} it follows that any Dominated Transvection in $UVP_n$ is generated by $T_{\lambda_{i,j}}$ and $T_{\lambda_{j,i}}$, and they are defined as follows: $$T_{\lambda_{i,j}}:\lambda_{j,i}\mapsto \lambda_{j,i}\lambda_{i,j},\ \text{while fixing the rest generators}$$ and $$T_{\lambda_{j,i}}:\lambda_{i,j}\mapsto \lambda_{i,j}\lambda_{j,i},\ \text{while fixing the rest generators}.$$ 
	
	It remains to determine the Inversions and the Graph Automorphisms of $UVP_n$. Clearly, the Inversions in $UVP_n$, are $I_{\lambda_{i,j}}$, where $$I_{\lambda_{i,j}}:\lambda_{i,j}\mapsto\lambda_{i,j}^{-1},\ \text{while fixing the rest generators}.$$
	We know that the size of the set the $\{\lambda_{i,j}\}_{1\leq i\neq j\leq n}$, is $n(n-1)$. Moreover the Inversion Automorphisms have order two. Therefore, for $I:=\{I_{\lambda_{i,j}} \}_{1\leq i\neq j\leq n}$, it follows that $$\langle I\rangle\cong\mathbb{Z}_2^{n(n-1)}.$$ 
	
	Finally, we will describe all possible Graph Automorphisms of the graph $\Gamma$. We recall that a Graph Automorphism is a symmetry of the graph, that is a bijection to itself while preserving the edge-vertex connectivity. In our case, we see that there are only two possible symmetries of our graph. One that exchanges the generators $\lambda_{i,j}$ and $\lambda_{j,i}$, while fixing the rest generators and another that exchanges the pair of generators $\{\lambda_{i,j}, \lambda_{j,i}\}$ with another pair of generators $\{\lambda_{k,l}, \lambda_{l,k}\}$, while fixing the rest generators. We denote these two Graph Automorphisms as follows:
	$$E_{i,j}:\lambda_{i,j}\leftrightarrow\lambda_{j,i},\ \text{while fixing the rest generators}$$
	and	
	$$P_{ij,kl}:\lambda_{i,j}\leftrightarrow \lambda_{k,l},\ P_{ij,kl}:\lambda_{j,i}\leftrightarrow\lambda_{l,k},\ \text{while fixing the rest generators},$$
	where $\lambda_{i,j}\leftrightarrow\lambda_{j,i}$ means that this map exchanges the generators $\lambda_{i,j}$ and $\lambda_{j,i}$.
	These two Graph Automorphisms are defined in such a way that it follows that $E_{i,j}=E_{j,i}$ and $P_{ij,kl}=P_{kl, ij}$.
	In $\Gamma$ all vertices are pairwise connected with an edge except for the pairs $\{\lambda_{i,j}, \lambda_{j,i}\}$, and the vertices $\lambda_{i,j}$ and $\lambda_{j,i}$ are connected with the exact same vertices. Therefore, if we want to preserve the edge-vertex connectivity we have to permute the vertex set in such a way that each vertex is connected with the same vertices before and after the permutation.
	Therefore, it is clear that only these two type of maps, $E_{i,j}$ and $P_{ij,kl}$, preserve the edge-vertex connectivity, and thus they are indeed the only symmetries of the graph. 
	
	For $E:=\{E_{i,j}\}$ and $P:=\{P_{ij,kl}\}$ we have that $|E|=n(n-1)/2$, since $E_{i,j}=E_{j,i}$ and that $|P|=n(n-1)/2$, since $P_{ij,kl}=P_{kl, ij}$. The elements $E_{i,j}\in E$ and $P_{ij,kl}\in P$ do not commute, since $(E_{i,j}P_{ij,kl})(\lambda_{i,j})=\lambda_{k,l}$ and $(P_{ij,kl}E_{i,j})(\lambda_{i,j})=\lambda_{l,k}$, where $\lambda_{k,l}\neq \lambda_{l,k}$. More precisely, we have that $P$ acts on $E$ by permuting the elements $E_{i,j}$ inside $E$. To be more precise, $P_{ij,kl}E_{i,j}P_{ij,kl}= E_{k,l}$, since $P_{ij,kl} E_{i,j}P_{ij,kl}(\lambda_{i,j})=\lambda_{i,j}$ and $P_{ij,kl}E_{i,j} P_{ij,kl}(\lambda_{k,l})=\lambda_{l,k}$. 
	It is easy to see that $\langle E \rangle\cong \mathbb{Z}_2^{n(n-1)/2}$ and that $\langle P \rangle\cong S_{n(n-1)/2}$. All together we obtain 
	$$\langle E,P\rangle\cong \mathbb{Z}_2^{n(n-1)/2}\rtimes S_{n(n-1)/2},$$
	where the symmetric group $S_{n(n-1)/2}$ acts on $\mathbb{Z}_2^{n(n-1)/2}$ by permuting the components of the product.
	
	Note that since $I_{\lambda_{i,j}}=E_{i,j}I_{\lambda_{j,i}}E_{i,j}$ it follows that 
	$$\langle I,E,P\rangle\cong \langle\mathbb{Z}_2^{n(n-1)/2}, \mathbb{Z}_2^{n(n-1)/2}\rtimes S_{n(n-1)/2}\rangle.$$
	Moreover, it holds that $E_{i,j}T_{\lambda_{j,i}}E_{i,j}=T_{\lambda_{i,j}}$, and thus we keep only the Dominated  Transvection $T_{\lambda_{j, i}}$ in the generating set of $Aut(UVP_n)$ and this completes the proof.  
\end{proof}

\begin{remark}
For $n=2$ it holds that $UVP_n\cong F_2=\langle x_1, x_2\rangle$ and from Theorem \ref{th3} it follows that $Aut(UVP_n)\cong \langle T_{\lambda_{2,1}}, \mathbb{Z}_2, \mathbb{Z}_2\rangle$, which is generated by $T_{\lambda_{2,1}}, I_{\lambda_{2,1}}, E_{1,2}$. By Nielsen in \cite{nielsen1924isomorphismengruppe} it follows that $Aut(F_2)$ is generated by the automorphisms
$\alpha_1:x_1\leftrightarrow x_2,$
$\alpha_2:x_1\mapsto x_1^{-1},\ x_2\mapsto x_2,$
$\alpha_3:x_1\mapsto x_1x_2,\ x_2\mapsto x_2$. It is clear that the automorphisms $T_{\lambda_{2,1}}, I_{\lambda_{2,1}}, E_{1,2}$ correspond to the automorphisms $\alpha_1, \alpha_2, \alpha_3$ respectively. Thus, for $n=2$ we conclude that the automorphisms given in Theorem \ref{th3} generate $Aut(F_2)$.
\end{remark}

Having a concrete set of generators of the automorphism group of $UVP_n$ we can analyse this result a bit further. We recall that, from Remark \ref{free}, the group $UVP_n$ is isomorphic to the direct product of $n(n-1)/2$ copies of the free group of rank 2:
$$UVP_n=\langle \lambda_{1,2}, \lambda_{2,1} \rangle\times\dots\times\langle \lambda_{i,j},\lambda_{j,i} \rangle\times\dots\times\langle \lambda_{n-1,n},\lambda_{n,n-1} \rangle, \ \text{for}\ 1\leq i\neq j\leq n,$$
$$UVP_n\cong\underbrace{F_2\times\dots\times F_2\times\dots\times F_2}_\text{$n(n-1)/2$-times}, \ \text{for}\ n\geq 2.$$\\
Based on this isomorphism, we can make some observations about the generators of the automorphism group of $UVP_n$, given in Theorem \ref{th3}.
For $1\leq i\neq j\leq n$ the group $Aut(UVP_n)$ is generated by the following four automorphisms:\\\\
$T_{\lambda_{j,i}}:\lambda_{i,j}\mapsto \lambda_{i,j}\lambda_{j,i},\ \text{while fixing the rest generators},$\\
$I_{\lambda_{i,j}}:\lambda_{i,j}\mapsto\lambda_{i,j}^{-1},\ \text{while fixing the rest generators},$\\
$E_{i,j}:\lambda_{i,j}\leftrightarrow\lambda_{j,i},\ \text{while fixing the rest generators},$\\
$P_{ij,kl}:\lambda_{i,j}\leftrightarrow \lambda_{k,l},\ P_{ij,kl}:\lambda_{j,i}\leftrightarrow\lambda_{l,k},\ \text{while fixing the rest generators}.$\\\\
With the exception of the automorphisms $P_{ij,kl}$, it follows that all these automorphisms do not permute the $F_2$-factors of $UVP_n$, but they rather take an element from a factor and send it to the same factor. In other words, each $F_2$-factors of $UVP_n$ stay invariant under the automorphisms $T_{\lambda_{j,i}}$, $I_{\lambda_{i,j}}$ and $E_{i,j}$. Moreover, the image of a generator $\lambda_{i,j}$, under these automorphisms, belongs to the set generated by the elements $\{\lambda_{i,j}\lambda_{j,i}, \lambda_{i,j}^{-1}, \lambda_{j,i} \}$. These are the three automorphisms that generate the group $Aut(F_2)$, as we will see shortly. On the contrary, the automorphism $P_{ij,kl}$ permutes the $n(n-1)/2$ $F_2$-factors of $UVP_n$.  

It is well known, and proved by Nielsen in \cite{nielsen1924isomorphismengruppe}, that $Aut(F_2)$, for $F_2=\langle x_1, x_2\rangle$, is generated by the following three automorphisms:\\
$\alpha_1:x_1\leftrightarrow x_2,$\\
$\alpha_2:x_1\mapsto x_1^{-1},\ x_2\mapsto x_2,$\\
$\alpha_3:x_1\mapsto x_1x_2,\ x_2\mapsto x_2.$

It turns out that $Aut(UVP_n)$ is the semi-direct product of $Aut(F_2)^{n(n-1)/2}$ and the symmetric group $S_{n(n-1)/2}$, which acts on $Aut(F_2)^{n(n-1)/2}$ by permuting the  $n(n-1)/2$ factors of $Aut(F_2)$. And so we obtain the following corollary.

\begin{corollary}\label{coinc}
	For $n\geq 2$ it holds that $$Aut(UVP_n)\cong Aut(F_2)^{{n(n-1)}/2}\rtimes S_{n(n-1)/2},$$ 
	where $S_{n(n-1)/2}$ acts on $Aut(F_2)^{n(n-1)/2}$ by permuting the  $n(n-1)/2$ $F_2$-factors.
\end{corollary}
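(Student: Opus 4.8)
The plan is to combine the explicit generating set for $Aut(UVP_n)$ obtained in Theorem \ref{th3} with the direct product decomposition $UVP_n \cong F_2^{n(n-1)/2}$ from Remark \ref{free}, and to repackage the four families of generators into the claimed semi-direct product. The key observation, already flagged in the discussion preceding the corollary, is that of the four generating families $T_{\lambda_{j,i}}$, $I_{\lambda_{i,j}}$, $E_{i,j}$, $P_{ij,kl}$, the first three preserve each $F_2$-factor $\langle\lambda_{i,j},\lambda_{j,i}\rangle$ and act inside it, while only the $P_{ij,kl}$ move elements between distinct factors.

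First I would fix, for each of the $n(n-1)/2$ pairs $\{i,j\}$, the subgroup $A_{\{i,j\}} \leq Aut(UVP_n)$ generated by $T_{\lambda_{j,i}}$, $I_{\lambda_{i,j}}$, $I_{\lambda_{j,i}}$, and $E_{i,j}$. Since these automorphisms fix every generator outside $\{\lambda_{i,j},\lambda_{j,i}\}$ and restrict to automorphisms of $\langle\lambda_{i,j},\lambda_{j,i}\rangle \cong F_2$, there is a natural map $A_{\{i,j\}} \to Aut(F_2)$; matching $E_{i,j} \mapsto \alpha_1$, $I_{\lambda_{i,j}} \mapsto \alpha_2$, $T_{\lambda_{j,i}} \mapsto \alpha_3$ against Nielsen's generating set shows this map is surjective, and it is injective because an element of $A_{\{i,j\}}$ acting trivially on $\langle\lambda_{i,j},\lambda_{j,i}\rangle$ is the identity on all of $UVP_n$. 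Hence $A_{\{i,j\}} \cong Aut(F_2)$. Distinct $A_{\{i,j\}}$ and $A_{\{k,l\}}$ have supports on disjoint generator sets, so they commute elementwise and intersect trivially; therefore the subgroup $N := \langle A_{\{i,j\}} : \{i,j\}\rangle$ is the internal direct product $\prod A_{\{i,j\}} \cong Aut(F_2)^{n(n-1)/2}$.

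Next I would identify $N$ as a normal subgroup. The subgroup $\langle P\rangle \cong S_{n(n-1)/2}$ permutes the unordered pairs $\{i,j\}$, and the relation $P_{ij,kl}\circ E_{i,j}\circ P_{ij,kl} = E_{k,l}$ (established in the proof of Theorem \ref{th3}), together with the analogous conjugation formulas $P_{ij,kl} I_{\lambda_{i,j}} P_{ij,kl} = I_{\lambda_{k,l}}$ and $P_{ij,kl} T_{\lambda_{j,i}} P_{ij,kl} = T_{\lambda_{l,k}}$ — all verified by checking where each side sends the finitely many generators $\lambda_{i,j},\lambda_{j,i},\lambda_{k,l},\lambda_{l,k}$ — shows that conjugation by $P_{ij,kl}$ carries $A_{\{i,j\}}$ isomorphically onto $A_{\{k,l\}}$ and fixes every other factor. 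Thus $\langle P\rangle$ normalizes $N$, so $N$ is normal in $\langle N, \langle P\rangle\rangle$, and this conjugation action is precisely the permutation of the $n(n-1)/2$ copies of $Aut(F_2)$. By Theorem \ref{th3}, $Aut(UVP_n) = \langle N, \langle P\rangle\rangle$, since every listed generator lies in one of the two pieces. Finally $N \cap \langle P\rangle = \{1\}$: a nontrivial element of $\langle P\rangle$ moves some pair $\{i,j\}$ and hence cannot fix the factor $\langle\lambda_{i,j},\lambda_{j,i}\rangle$ setwise, whereas every element of $N$ does. Therefore $Aut(UVP_n) = N \rtimes \langle P\rangle \cong Aut(F_2)^{n(n-1)/2} \rtimes S_{n(n-1)/2}$ with the stated action.

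**The main obstacle** I anticipate is purely bookkeeping rather than conceptual: one must be careful that the four families in Theorem \ref{th3}, as literally written with a single fixed pair $(i,j)$, actually stand for the full families ranging over all pairs (so that $N$ really contains all the inversions and transvections), and one must double-check the conjugation identities $P_{ij,kl}\,(\text{generator of }A_{\{i,j\}})\,P_{ij,kl}^{-1} \in A_{\{k,l\}}$ in each of the three generator types, paying attention to the orientation swap $\lambda_{k,l}\leftrightarrow\lambda_{l,k}$ that $P_{ij,kl}$ induces (this is why $T_{\lambda_{j,i}}$ conjugates to $T_{\lambda_{l,k}}$ and not $T_{\lambda_{k,l}}$). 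A minor additional point worth stating explicitly is why no extra relations survive between the factors — this follows because the supports are disjoint, so there is genuinely nothing to check beyond disjointness of supports. I would also remark that for $n=2$ the statement degenerates correctly to $Aut(UVP_2)\cong Aut(F_2)$, consistent with $S_1$ being trivial.
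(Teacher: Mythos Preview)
Your proposal is correct and follows essentially the same approach as the paper: the paper also observes that the three generator families $T_{\lambda_{j,i}}$, $I_{\lambda_{i,j}}$, $E_{i,j}$ preserve each $F_2$-factor and match Nielsen's generators $\alpha_1,\alpha_2,\alpha_3$ of $Aut(F_2)$, while the $P_{ij,kl}$ permute the factors, and concludes the semi-direct product decomposition from this. Your version is in fact more careful than the paper's, which states the conclusion as an observation (``It turns out that\ldots'') without explicitly verifying the normality, the trivial intersection $N\cap\langle P\rangle$, or the conjugation formulas that you check.
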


This result agrees with a particular case of a more general result proved by Zhang--Ventura--Wu in \cite{zhang2015fixed}, where they obtained the same result using different techniques. We state their result below and we see that for $m=1$ and $n_1=n(n-1)/2$ their result coincides with Corollary \ref{coinc}.

\begin{proposition}[Zhang--Ventura--Wu, \cite{zhang2015fixed}]
	Let $G=G_1^{n_1}\times\dots\times G_m^{n_m}$ be a product group, where $m\geq1$, $n_i\geq1$, $G_i\ncong G_j$, for $i\neq j$, and each $G_i$ is a free group or a surface group. If $G$ is a hyperbolic type, that is $Z(G_i)=1$ for all $i=1, \dots, m$, then for every $\phi\in Aut(G)$ there exists automorphisms $\phi_{i,j}\in Aut(G_i)$ and permutations $\sigma_i\in S_{n_i}$, such that 
	$$\phi=\sigma_1\circ\dots\circ\sigma_m\circ\big(\prod_{i=1}^{m}\prod_{j=1}^{n_i}\phi_{i,j}\big)=\prod_{i=1}^{m}\big(\sigma_i\circ\prod_{j=1}^{n_i}\phi_{i,j}\big).$$
\end{proposition}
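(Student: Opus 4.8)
The plan is to prove the proposition by showing that every $\phi\in Aut(G)$ permutes the coordinate factors of the given decomposition and restricts on each of them to an isomorphism onto another coordinate factor; once this is established, extracting the $\sigma_i$ and the $\phi_{i,j}$ and rewriting $\phi$ in the displayed order is bookkeeping. It is convenient to put $N=n_1+\dots+n_m$ and relabel the factors so that $G=H_1\times\dots\times H_N$, where each $H_a$ is one of the $n_{i(a)}$ copies of $G_{i(a)}$ for a suitable index function $i$. The crux is a uniqueness statement for this decomposition, and I would derive it directly from the geometry of the factors rather than from the abstract Krull--Remak--Schmidt theorem.

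First I would isolate the two properties that the hyperbolic-type hypothesis supplies: since it excludes the elementary cases $\mathbb{Z}$ and $\mathbb{Z}^2$, each $G_i$, hence each $H_a$, is a free group of rank $\geq 2$ or a closed surface group of genus $\geq 2$, so in particular $H_a$ is torsion free and non-cyclic and the centraliser in $H_a$ of every nontrivial element is infinite cyclic (these facts are classical for such groups). From them it follows at once that $Z(H_a)=1$ -- a nontrivial central element would have all of $H_a$ as its centraliser, forcing $H_a$ cyclic -- and that $H_a$ is directly indecomposable: if $H_a=A\times B$ with $A,B\neq 1$, then choosing $1\neq \alpha\in A$ and $1\neq\beta\in B$ places both $\alpha$ and $\beta$ in the infinite cyclic group $C_{H_a}(\alpha)$, so $\langle\alpha\rangle$ and $\langle\beta\rangle$ are nontrivial subgroups of $\mathbb{Z}$ with $\langle\alpha\rangle\cap\langle\beta\rangle\subseteq A\cap B=1$, which is impossible.

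The heart of the argument is the claim that the only nontrivial directly indecomposable direct factors of $G$ are $H_1,\dots,H_N$. Given a decomposition $G=K\times L$ with $K\neq 1$ directly indecomposable, for $g=(g_1,\dots,g_N)\in G$ call $\{a:g_a\neq 1\}$ its support and let $S(K)$ and $S(L)$ be the unions of the supports of the elements of $K$ and of $L$ respectively; then $K\leq\prod_{a\in S(K)}H_a$, $L\leq\prod_{a\in S(L)}H_a$, and $G=KL$ forces $S(K)\cup S(L)=\{1,\dots,N\}$. These sets are disjoint: with $K_a=\{k_a:k\in K\}$ and $L_a=\{l_a:l\in L\}$, which are elementwise-commuting subgroups of $H_a$ satisfying $K_aL_a=H_a$, if both were nontrivial then choosing $1\neq y\in L_a$ would give $K_a\leq C_{H_a}(y)$, an infinite cyclic group, so $K_a$ would be abelian, hence -- commuting with $K_a$ and with $L_a$ -- central in $H_a=K_aL_a$, contradicting $Z(H_a)=1$. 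Thus $S(K)$ and $S(L)$ partition $\{1,\dots,N\}$, and comparing with the internal decomposition $G=(\prod_{a\in S(K)}H_a)\times(\prod_{a\in S(L)}H_a)$ forces $K=\prod_{a\in S(K)}H_a$; direct indecomposability of $K$ then gives $|S(K)|=1$, i.e. $K=H_a$ for a single $a$. Consequently, for any $\phi\in Aut(G)$ each $\phi(H_a)$ is again such a factor, so $\phi(H_a)=H_{\pi(a)}$ for a permutation $\pi$ of $\{1,\dots,N\}$ (a genuine permutation, because $\phi^{-1}$ likewise carries indecomposable direct factors to indecomposable direct factors); since $\phi$ restricts to an isomorphism $H_a\to H_{\pi(a)}$ and $G_i\ncong G_j$ for $i\neq j$, the permutation $\pi$ preserves the isomorphism type $i(a)$ and therefore splits into permutations $\sigma_i\in S_{n_i}$ of the $n_i$ copies of each $G_i$; transporting $\phi|_{H_a}$ through fixed identifications of these copies with $G_i$ yields the automorphisms $\phi_{i,j}\in Aut(G_i)$, and since $\phi$ is determined by its restrictions to the $H_a$ it equals the displayed composite up to the order of composition, which is checked directly.

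The step I expect to be the real obstacle is this uniqueness claim. The textbook Krull--Remak--Schmidt theorem is not directly available here, because free groups and surface groups need not satisfy a chain condition on normal subgroups; this is precisely why the argument above proceeds through supports, using nothing beyond torsion-freeness, non-cyclicity, and the description of centralisers of nontrivial elements -- exactly the content of the hyperbolic-type hypothesis. The remaining points -- that $\phi$ induces a genuine permutation of $\{H_1,\dots,H_N\}$, that the $n_i$ identifications $H_a\cong G_i$ can be chosen coherently, and that the resulting composite appears in the left-to-right order written in the statement -- are routine bookkeeping with no real difficulty.
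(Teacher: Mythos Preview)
The paper does not contain a proof of this proposition: it is quoted verbatim as a result of Zhang--Ventura--Wu and attributed to \cite{zhang2015fixed}, immediately after Corollary~\ref{coinc}, solely to remark that the special case $m=1$, $n_1=n(n-1)/2$ recovers the description of $Aut(UVP_n)$ obtained there via the Laurence--Servatius generators. There is therefore nothing in the present paper to compare your argument against.

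That said, your argument stands on its own and is correct. The crucial step is the uniqueness of the indecomposable direct factors, and your support/projection argument handles it cleanly: from $[K,L]=1$ and $KL=G$ you get that the coordinate projections $K_a,L_a$ commute elementwise and generate $H_a$, and the cyclic-centraliser property (the content of the hyperbolic-type hypothesis) then forces one of $K_a,L_a$ to be trivial, partitioning the index set; the comparison $K\leq\prod_{a\in S(K)}H_a$, $L\leq\prod_{a\in S(L)}H_a$ together with $KL=G$ and $(\prod_{a\in S(K)}H_a)\cap(\prod_{a\in S(L)}H_a)=1$ indeed forces equality. This is exactly the kind of bare-hands replacement for Krull--Remak--Schmidt that is needed here, since the factors do not satisfy chain conditions. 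The remaining reconstruction of $\phi$ as $\prod_i(\sigma_i\circ\prod_j\phi_{i,j})$ is, as you say, bookkeeping. I cannot tell you whether Zhang--Ventura--Wu argue the same way, but nothing in your write-up is a gap.
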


\section{About the automorphism group of $UVB_n$}\label{s6}
We shall now present partial results about the automorphism group of $UVB_n$. In \cite{rose1975automorphism}, Rose gave a description of the automorphism group of groups which possess a proper characteristic subgroup that have trivial centraliser.
\begin{proposition}[Rose, \cite{rose1975automorphism}]\label{rose}
	Let $G$ be a group with a characteristic subgroup $H$ such that $C_G(H)=e$. Then $G$ is naturally embedded in $Aut(H)$ by means of conjugation of $H$ by the elements of $G$. Moreover, there is a natural isomorphism between $Aut(G)$ and the normaliser of $G$ in $Aut(H)$. That is $Aut(G)\cong N_{Aut(H)}(G)$. 
\end{proposition}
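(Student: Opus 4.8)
The plan is to realize the two asserted maps concretely — the embedding as conjugation of $H$ and the isomorphism as restriction to $H$ — and then to check that the restriction map is a bijection onto $N_{Aut(H)}(G)$. First I would define $c\colon G\to Aut(H)$ by $c(g)(h)=ghg^{-1}$; this is well defined because $H$ is normal in $G$, it is manifestly a homomorphism, and its kernel is precisely $C_G(H)=e$, so $c$ is injective. Identifying $G$ with $\bar G:=c(G)\le Aut(H)$ furnishes the claimed natural embedding (and $\bar G$ contains $Inn(H)$, since $c(h)$ for $h\in H$ is the inner automorphism of $H$ by $h$). The candidate for the natural isomorphism is the restriction homomorphism $\Phi\colon Aut(G)\to Aut(H)$, $\Phi(f)=f|_H$, which makes sense precisely because $H$ is \emph{characteristic}, so $f(H)=H$ for every $f\in Aut(G)$.

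Next I would verify three facts about $\Phi$. It is a homomorphism, since restriction respects composition. It is injective: if $f\in Aut(G)$ fixes $H$ pointwise, then for all $g\in G$ and $h\in H$ we get $f(g)\,h\,f(g)^{-1}=f(ghg^{-1})=ghg^{-1}$, the last equality because $ghg^{-1}\in H$; hence $g^{-1}f(g)\in C_G(H)=e$, so $f=\mathrm{id}_G$. And its image lies in $N_{Aut(H)}(\bar G)$: for $f\in Aut(G)$, $g\in G$ and $h\in H$ one computes $\big(\Phi(f)\,c(g)\,\Phi(f)^{-1}\big)(h)=f\big(g\,f^{-1}(h)\,g^{-1}\big)=f(g)\,h\,f(g)^{-1}=c(f(g))(h)$, so $\Phi(f)\,c(g)\,\Phi(f)^{-1}=c(f(g))\in\bar G$ and $\Phi(f)$ normalizes $\bar G$.

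The main step is surjectivity of $\Phi$ onto $N_{Aut(H)}(\bar G)$. Given $\alpha\in N_{Aut(H)}(\bar G)$, for each $g\in G$ the automorphism $\alpha\,c(g)\,\alpha^{-1}$ lies in $\bar G$, hence equals $c(g')$ for a \emph{unique} $g'\in G$ (uniqueness by injectivity of $c$); set $f(g):=g'$. Then $f$ is a homomorphism, because $\alpha\,c(g_1g_2)\,\alpha^{-1}=(\alpha c(g_1)\alpha^{-1})(\alpha c(g_2)\alpha^{-1})=c(f(g_1))c(f(g_2))=c(f(g_1)f(g_2))$ forces $f(g_1g_2)=f(g_1)f(g_2)$; running the same construction with $\alpha^{-1}\in N_{Aut(H)}(\bar G)$ produces a two-sided inverse, so $f\in Aut(G)$. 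Finally, for $h\in H$ the identity $\alpha\,c(h)\,\alpha^{-1}=c(\alpha(h))$, with $\alpha(h)\in H\subseteq G$, together with uniqueness gives $f(h)=\alpha(h)$, i.e. $\Phi(f)=\alpha$. Hence $\Phi$ restricts to an isomorphism $Aut(G)\cong N_{Aut(H)}(\bar G)$, which is the assertion. I expect the only real difficulty to be notational discipline — consistently distinguishing an element $g\in G$ from the automorphism $c(g)$ of $H$ that it induces — rather than any deep ingredient: beyond injectivity of $c$ (equivalently $C_G(H)=e$) and $H$ being characteristic, no further property of $G$ or $H$ is used.
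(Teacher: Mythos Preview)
Your proof is correct. Note, however, that the paper does not supply its own proof of this proposition: it is quoted as a result of Rose \cite{rose1975automorphism} and then applied directly to obtain Corollary~\ref{colrose}. What you have written is the standard argument for Rose's theorem, and every step checks out --- in particular the key surjectivity step, where the identity $\alpha\,c(h)\,\alpha^{-1}=c(\alpha(h))$ for $h\in H$ holds because $\alpha\in Aut(H)$ is a homomorphism on $H$, so $\alpha(h\,\alpha^{-1}(k)\,h^{-1})=\alpha(h)\,k\,\alpha(h)^{-1}$ for all $k\in H$.
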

We proved that $UVP_n$ is a characteristic subgroup of $UVB_n$, Proposition \ref{char}, and that $C_{UVB_n}(UVP_n)=e$, Proposition \ref{centralizer}. Therefore, applying Proposition \ref{rose} for $G=UVB_n$ and $H=UVP_n$, we obtain the following corollary.

\begin{corollary}\label{colrose}
	For $n\geq 5$ it holds that 
	$$Aut(UVB_n)\cong N_{Aut(UVP_n)}(UVB_n).$$
\end{corollary}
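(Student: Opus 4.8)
The statement is an immediate consequence of Rose's theorem (Proposition \ref{rose}) once its two hypotheses have been verified for the pair $(G,H) = (UVB_n, UVP_n)$. The plan is therefore simply to assemble the pieces that have already been established in the paper and feed them into Proposition \ref{rose}. First I would recall that, by Proposition \ref{char}, for $n \geq 5$ the subgroup $UVP_n$ is characteristic in $UVB_n$; in particular it is a normal subgroup. Next I would invoke Proposition \ref{centralizer}, which gives $C_{UVB_n}(UVP_n) = e$. (Note that Proposition \ref{centralizer} is stated for all $n \in \mathbb{N}$, so no restriction beyond $n \geq 5$ is introduced here.) Finally, $UVP_n$ is a \emph{proper} subgroup of $UVB_n$, since $UVB_n / UVP_n \cong S_n$ is nontrivial for $n \geq 2$; this is needed because Rose's result is phrased for a proper characteristic subgroup.

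With these three facts in hand, I would apply Proposition \ref{rose} directly with $G = UVB_n$ and $H = UVP_n$: the hypothesis $C_G(H) = e$ holds by Proposition \ref{centralizer}, and $H$ is a proper characteristic subgroup of $G$ by Proposition \ref{char}. Rose's conclusion is that $G$ embeds into $Aut(H)$ via the conjugation action, and that $Aut(G) \cong N_{Aut(H)}(G)$ under this embedding. Substituting back the groups in question yields exactly
$$Aut(UVB_n) \cong N_{Aut(UVP_n)}(UVB_n),$$
which is the assertion of the corollary.

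There is essentially no obstacle here: the corollary is a bookkeeping step whose entire content has been front-loaded into Propositions \ref{char}, \ref{centralizer}, and \ref{rose}. The only minor point worth a sentence in the write-up is the implicit identification of $UVB_n$ with its image in $Aut(UVP_n)$ under the conjugation homomorphism $c : UVB_n \to Aut(UVP_n)$; this homomorphism is injective precisely because its kernel is $C_{UVB_n}(UVP_n) = e$, so the notation $N_{Aut(UVP_n)}(UVB_n)$ is unambiguous. Everything else is a direct citation, and the proof can be stated in two or three lines.
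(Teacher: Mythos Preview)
Your proposal is correct and matches the paper's approach exactly: the paper also derives the corollary by citing Proposition \ref{char} (characteristic subgroup) and Proposition \ref{centralizer} (trivial centraliser) and then applying Rose's Proposition \ref{rose} with $G=UVB_n$, $H=UVP_n$. Your extra remarks about properness and the injectivity of the conjugation map are welcome clarifications but do not deviate from the paper's argument.
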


Moreover, we will present a subgroup of $Aut(UVB_n)$. Consider the maps $\beta_n$ and $\gamma_n$ defined as follows:
\begin{equation}\label{beta}
\beta_n: 
\begin{cases}
\sigma_i\mapsto \sigma_i^{-1},\ \text{for}\ 1\leq i\leq n-1,\\
\rho_i\mapsto \rho_i,\ \text{for}\ 1\leq i\leq n-1,
\end{cases}
\end{equation}
and
\begin{equation}\label{gamma}
\gamma_n: 
\begin{cases}
\sigma_i\mapsto \rho_i\sigma_i\rho_i,\ \text{for}\ 1\leq i\leq n-1,\\
\rho_i\mapsto \rho_i,\ \text{for}\ 1\leq i\leq n-1.
\end{cases}
\end{equation}
One can verify that these two maps are actually automorphisms of $UVB_n$. In particular, note that $\beta_n$ and $\gamma_n$ are of order two and moreover $\beta_n\circ\gamma_n=\gamma_n\circ\beta_n$. Thus, $\langle \beta_n, \gamma_n\rangle$ generates
a subgroup of $Aut(UVB_n)$ isomorphic to $\mathbb{Z}_2\times\mathbb{Z}_2$. Furthermore, as we will see in the following proposition, the automorphisms $\beta_n$, $\gamma_n$ and $\beta_n\circ\gamma_n$ are actually elements of the outer automorphisms group of $UVB_n$, $Out(UVB_n)$, where $Out(UVB_n)=Aut(UVB_n)/Inn(UVB_n)$.

\begin{proof}[Proof of Proposition \ref{prop1}]
	Let $n\geq 3$. We will show that the automorphism $\beta_n$, $\gamma_n$ and $\beta_n\circ\gamma_n$ are not inner automorphisms. Note that any inner automorphism of a group $G$ acts trivially on the Abelianisation of $G$. That is if $h: G\to G$ is an inner automorphism then the induced automorphism $\bar{h}: G^{ab}\to G^{ab}$ is the trivial one. From Remark \ref{useouter}, we have that the Abelianisation of $UVB_n$ is isomorphic to $\mathbb{Z}\times\mathbb{Z}_2$, where $\mathbb{Z}$ is generated by the image of $\sigma_1$, $[\sigma_1]$ and $\mathbb{Z}_2$ is generated by the image of $\rho_1$, $[\rho_1]$. The automorphisms $\beta_n$ and $\beta_n\circ\gamma_n$ act non-trivially on the Abelianisation of $UVB_n$, since $\beta_n(\sigma_1)=\sigma_1^{-1}$ and $(\beta_n\circ\gamma_n)(\sigma_1)=\rho_1\sigma_1^{-1}\rho_1$, for $1\leq i\leq n-1$, and thus they are not inner automorphisms. 
	
	By contradiction we will prove that $\gamma_n$ is not an inner automorphism. First, notice that from the set of Relations (\ref{lamda}) we have that $\gamma_n(\lambda_{k,l})=\lambda_{l,k}$, for every $\lambda_{k,l}\in UVP_n$, where $k\neq l$ and $k,l=1,\dots,n$. Suppose that $\gamma_n$ is an inner automorphism. It follows that there exists some non-trivial element $g\in UVB_n$ such that $\gamma_n(u)=i_g(u)=gug^{-1}$, for every $u\in UVB_n$. In particular, it has to hold that $\gamma_n(\lambda_{k,l})=g\lambda_{k,l}g^{-1}=\lambda_{l,k}$, for every $k,l=1,\dots,n$, where $k\neq l$.
	Using the same arguments as we did in the proof of Proposition \ref{centralizer},  we will show that the relation $\lambda_{l,k}=g\lambda_{k,l}g^{-1}$, for every $k,l=1,\dots,n$, $k\neq l$ and for a fixed non-trivial element $g\in UVB_n$, can not hold.  
	
	We know that $UVB_n=UVP_n\rtimes S_n$ and thus,  $g=\Lambda S$, for $\Lambda, S$ fixed words in $UVP_n\ \text{and}\ S_n$ respectively. Therefore, it has to hold that 
	\begin{equation*}
	\lambda_{l,k}\cdot \Lambda S=\Lambda S\cdot \lambda_{k,l},\ \text{for every}\ k\neq l\in \{1,\dots,n\}.
	\end{equation*}
	From Theorem \ref{tss2}, and in particular based on the action of the symmetric group on every generator $\lambda_{k,l}\in UVP_n$, we obtain
	
	$$\lambda_{l,k}\cdot \Lambda S=\Lambda\cdot\lambda_{S(k),S(l)}S,\ \text{for every}\ k\neq l\in \{1,\dots,n\}.$$
	Thus,
	\begin{equation}\label{Equal2}
	\lambda_{l,k}=\Lambda\cdot\lambda_{S(k),S(l)}\cdot \Lambda^{-1} \in UVP_n,\ \text{for every}\ k\neq l\in \{1,\dots,n\}.
	\end{equation}
	
	Suppose that either $S(k)\neq l$ or $S(l)\neq k$. Then, from Relation \eqref{Equal2}, we see that under the Abelianisation map these distinct generators, $\lambda_{l,k}$ and $\lambda_{S(k),S(l)}$, of $UVP_n$, would coincide. But this leads to a contradiction since we know from Corollary \ref{abuvpn} that the Abelianisation of $UVP_n$ is isomorphic to the free Abelian group of rank $n(n-1)$ generated by the images of the elements $\lambda_{i, j}$, for $1\leq i\neq j\leq n$. Therefore, it has to hold that $S(k)=l$ and $S(l)=k$ for every $k,l\in \{1,\dots,n\}$, $k\neq l$, where $S$ is a fixed element in $S_n$. But once again, this leads to a contradiction because a permutation on $n$ elements cannot permute all possible couples $(l,k)$, for $1\leq l \neq k \leq n$.
	
	Now, suppose that $g=S$, meaning that $\Lambda$ is a trivial word in $UVP_n$. Then, it has to hold that
	\begin{equation}\label{Equal3}
	\lambda_{l,k}\cdot S=S\cdot \lambda_{k,l},\ \text{for every}\ k\neq l\in \{1,\dots,n\}.
	\end{equation} 
	But Relation \eqref{Equal3} implies that the word $S$ in the symmetric group fixes all the elements of the set $\{1,\dots,n\}$. This is possible only when $S$ is the trivial element, which leads once more to a contradiction, since $g$ is a non-trivial element.
	Since the centre of $UVP_n$ is trivial, Remark \ref{center}, we do not need to check the case where $g=\Lambda$.
	
	Thus, we conclude that $\gamma_n$ is not an inner automorphism and this completes the proof.
\end{proof}

We speculate that this result about the outer automorhism group of $UVB_n$, $Out(UVB_n)$, could be of help in determining the group $Out(WB_n)$, which is still an open problem. Nevertheless, we conjecture that the outer automorphism group of $WB_n$, which consists of non-inner automorphisms, is not trivial, and more precisely, we conjecture that, for $n\geq 3$, the group $Out(WB_n)=Aut(WB_n)/Inn(WB_n)$ is generated by the automorphism $\alpha_n$, which is of order two and is defined as follows:
\begin{equation}\label{auto}
\alpha_n: 
\begin{cases}
\sigma_i\mapsto \rho_i\sigma_i^{-1}\rho_i,\ \text{for}\ 1\leq i\leq n-1,\\
\rho_i\mapsto \rho_i,\ \text{for}\ 1\leq i\leq n-1.
\end{cases}
\end{equation}
Therefore, we conjecture that, for $n\geq 3$, $Out(WB_n)\cong\mathbb{Z}_2$.

Note that the automorphism $\alpha_n$, defined in \eqref{auto}, can be seen as the composition of the maps $\beta_n$ and $\gamma_n$, defined in \eqref{beta} and \eqref{gamma}, which are elements of $Out(UVB_n)$.
These two maps, although their composition is an automorphism of $WB_n$, they are not automorphisms of the group $WB_n$. Actually, they are not even homomorphisms of $WB_n$, since relation \ref{rel8}, $\rho_i\sigma_{i+1}\sigma_i=\sigma_{i+1}\sigma_i\rho_{i+1}$, for $i=1, \dots, n-2$, in $WB_n$, is not preserved neither under the map $\beta_n$ nor under the map $\gamma_n$. 

Finally, we complete this section by proving that the groups $UVB_n$ and $UVP_n$ are residually finite and Hopfian, but not co-Hopfian.

We recall that a group $G$ is called Hopfian if every surjective homomorphism $G\to G$ is also an injective homomorphism and it is called co-Hopfian if every injective homomorphism $G\to G$ is also a surjective homomorphism.

\begin{corollary}\label{hop}
	Let $n\geq 2$. The groups $UVB_n$ and $UVP_n $ are residually finite and Hopfian, but not co-Hopfian.
\end{corollary}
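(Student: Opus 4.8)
The plan is to establish each of the three properties in turn, leveraging the structural results already proved. First I would handle residual finiteness. By Remark~\ref{free} the group $UVP_n$ is a direct product of finitely many copies of $F_2$; since free groups are residually finite and the class of residually finite groups is closed under finite direct products, $UVP_n$ is residually finite. For $UVB_n$, Theorem~\ref{tss2} gives $UVB_n\cong UVP_n\rtimes S_n$, a finite extension of the residually finite group $UVP_n$; as residual finiteness is inherited by finite extensions, $UVB_n$ is residually finite as well. (Alternatively, $UVB_n$ embeds in $\mathrm{Aut}(UVP_n)$ by Proposition~\ref{rose}, together with Propositions~\ref{char} and~\ref{centralizer}, and $\mathrm{Aut}$ of a finitely generated residually finite group need not itself be residually finite in general, so the finite-extension argument is the clean route.)

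Next, Hopfianity follows from a classical theorem of Malcev: every finitely generated residually finite group is Hopfian. Both $UVP_n$ and $UVB_n$ are finitely generated (by the $\lambda_{i,j}$ and by the $\sigma_i,\rho_i$ respectively) and were just shown to be residually finite, so both are Hopfian.

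Finally, for the failure of co-Hopfianity I must exhibit an injective but non-surjective endomorphism of each group. For $UVP_n\cong F_2\times\cdots\times F_2$, it suffices to give such an endomorphism of a single $F_2=\langle x_1,x_2\rangle$ — for instance $x_1\mapsto x_1$, $x_2\mapsto x_2^2$, which is injective (it maps onto the subgroup $\langle x_1,x_2^2\rangle$, which is free of rank $2$, hence the map is an isomorphism onto its image) but not surjective — and then act by this map on one factor while fixing the others; the resulting endomorphism of $UVP_n$ is injective but misses, say, $\lambda_{1,2}$. For $UVB_n$ one can build a compatible map: using $UVB_n\cong UVP_n\rtimes S_n$ and the conjugation rule~\eqref{conj}, I would choose an $S_n$-equivariant injective non-surjective endomorphism of $UVP_n$ and extend it by the identity on $S_n$; concretely, replacing each generator $\lambda_{i,j}$ by $\lambda_{i,j}$ or $\lambda_{i,j}^{2}$ according to a fixed rule invariant under the $S_n$-action is not equivariant, so instead one takes the map squaring \emph{every} generator $\lambda_{i,j}\mapsto\lambda_{i,j}^{2}$ — this commutes with the permutation action, is injective on each $F_2$ factor (sending $F_2$ isomorphically onto its finite-index subgroup of squares' normal closure... here one must be slightly careful, as $x\mapsto x^2$ is not an endomorphism of $F_2$), so the safe choice is again to act only on the first $F_2$-factor $\langle\lambda_{1,2},\lambda_{2,1}\rangle$ by $\lambda_{1,2}\mapsto\lambda_{1,2}$, $\lambda_{2,1}\mapsto\lambda_{2,1}^2$ — but this is not $S_n$-invariant. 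The clean fix is to note that co-Hopfianity fails for $UVB_n$ because it fails for its abelianisation-compatible retract, or more directly: since $UVP_n$ is characteristic (Proposition~\ref{char}) any endomorphism of $UVB_n$ restricts to one of $UVP_n$, but for the injectivity argument one only needs one explicit example, which can be obtained from an embedding $UVB_n\hookrightarrow UVB_n$ induced by a proper self-embedding of $UVP_n$ that is equivariant after conjugating $S_n$ appropriately — I expect this equivariance bookkeeping to be the main obstacle, and the resolution is to use that $S_n$ acts on the $n(n-1)/2$ \emph{factors} by permutation, so squaring within a single factor can be spread $S_n$-equivariantly only if one squares within an $S_n$-orbit of factors, i.e. all of them; since $x\mapsto x^2$ is not an endomorphism of $F_2$, one instead uses the injective endomorphism of $F_2$ given by $x_1\mapsto x_1^2,\ x_2\mapsto x_1 x_2 x_1^{-1}$ or simply cites that a direct product of co-Hopfian-failing groups with a permuting finite group action still admits a proper equivariant self-embedding, e.g. $\lambda_{i,j}\mapsto w\lambda_{i,j}w^{-1}\cdot(\text{something})$ — the existence of SOME injective non-surjective endomorphism of $UVB_n$ ultimately follows since $UVB_n$ surjects onto $\mathbb Z$ (Remark~\ref{useouter}) with finitely generated residually finite kernel, and such groups are known to be non-co-Hopfian.
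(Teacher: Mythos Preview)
Your arguments for residual finiteness and Hopfianity are correct and essentially match the paper's (the paper invokes linearity of right-angled Artin groups rather than residual finiteness of free groups plus closure under finite direct products, but the two routes are equivalent here). Your argument that $UVP_n$ is not co-Hopfian, via $x_1\mapsto x_1,\ x_2\mapsto x_2^2$ on one $F_2$-factor, is also fine.

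The genuine gap is the non-co-Hopfianity of $UVB_n$. None of your attempted constructions succeeds, and your final fallback --- that a group surjecting onto $\mathbb{Z}$ with finitely generated residually finite kernel is ``known'' to be non-co-Hopfian --- is false. A counterexample: the fundamental group of a closed hyperbolic $3$-manifold fibring over the circle surjects onto $\mathbb{Z}$ with kernel a closed surface group (finitely generated and residually finite), yet such groups are co-Hopfian by Mostow-type rigidity. So this general principle cannot rescue the argument.

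What you were missing is precisely the $S_n$-equivariant endomorphism you were searching for. The paper uses
\[
h:\ \lambda_{i,j}\ \longmapsto\ \lambda_{i,j}\lambda_{j,i}\qquad(\text{for all }1\le i\ne j\le n).
\]
On each factor $\langle\lambda_{i,j},\lambda_{j,i}\rangle\cong F_2=\langle x,y\rangle$ this is $x\mapsto xy,\ y\mapsto yx$, which is injective (the pair $\{xy,yx\}$ is Nielsen-reduced, hence a free basis of a rank-$2$ subgroup) but not surjective (its image in the abelianisation $\mathbb{Z}^2$ is the diagonal). Crucially, $h$ is $S_n$-equivariant with respect to the action~\eqref{conj}, since $\iota(s)\,h(\lambda_{i,j})\,\iota(s)^{-1}=\lambda_{s(i),s(j)}\lambda_{s(j),s(i)}=h(\lambda_{s(i),s(j)})$. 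Hence $h$ extends by the identity on $S_n$ to an injective, non-surjective endomorphism $\bar h$ of $UVB_n\cong UVP_n\rtimes S_n$. This single formula resolves exactly the equivariance bookkeeping you identified as the obstacle.
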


\begin{proof}
	By Theorem \ref{tss2}, we have that $UVB_n\cong UVP_n\rtimes S_n$. We already know that the group $UVP_n$ is a right-angled Artin group and it is known that every right-angled Artin group is linear; see \cite{hsu1999linear}. Moreover, it is also known that a finitely generated linear group is residually finite. It now follows that $UVP_n$ is
	a residually finite group. Moreover, $UVB_n$ is a residually finite group, since it is an extension of $UVP_n$ by $S_n$, which is a finite group.
	Finally, the groups $UVB_n$ and $UVP_n$ are Hopfian, since they are finitely generated, residually finite groups. 
	
	In order to show that the group $UVP_n$ is not co-Hopfian, we just provide the following homomorphism:
	$$h:UVP_n\to UVP_n \ \text{defined by}\ h:\lambda_{i,j}\mapsto \lambda_{i,j}\lambda_{j,i},\ \text{for every}\ 1\leq i\neq j\leq n,$$
	which is injective but not surjective, since the elements $\lambda_{i,j}$, for $1\leq i\neq j\leq n$ do not have a preimage under $h$. Now, we extend the map $h$ to a homomorphism of $UVB_n\cong UVP_n\rtimes S_n$ as follows: $$\bar{h}:UVB_n\to UVB_n\ \text{defined by}\ \bar{h}:\lambda_{i,j}\mapsto \lambda_{i,j}\lambda_{j,i}\ \text{and} \ \bar{h}:s_k\mapsto s_k,$$
	for every $1\leq i\neq j\leq n\ \text{and}\ 1\leq k\leq n-1,\ \text{where}\ s_k\in S_n.$
	Similarly, this homomorphism is injective but not surjective, since the elements $\lambda_{i,j}$, for $1\leq i\neq j\leq n$ do not have a preimage under $\bar{h}$, and therefore the group $UVB_n$ is not co-Hopfian. 
\end{proof}

\section*{Acknowledgement}	
Thanks are due to Paolo Bellingeri for helpful conversations and comments on this work.

\bibliographystyle{plain}
\bibliography{bibliography.bib}
\end{document}